\def\zz{{\bf Z}}
\def\qq{{\bf Q}}
\def\calc{\mathcal{C}}
\def\calg{\mathcal{G}}
\newtheorem{theorem}{Theorem}[section]
\newtheorem{lemma}[theorem]{Lemma}
\newtheorem{corollary}[theorem]{Corollary}
\theoremstyle{definition}
\theoremstyle{remark}
\numberwithin{equation}{section}
\date{today}
\begin{document}

\title{Stabilizing  Four--Torsion in Classical Knot Concordance}

\author{Charles Livingston}

\address{Department of Mathematics, Indiana University, Bloomington, IN 47405}

\email{livingst@indiana.edu}

\author{Swatee Naik}

\address{Department of Mathematics, University of Nevada, Reno, NV 89557}

\email{naik@unr.edu}
\thanks{The second author was supported in part by NSF Grant \#DMS-0709625.}
\keywords{\texttt{Knot, Concordance}}

\subjclass{57M25, 57N70}

\date{\today}

\begin{abstract} Let $M_K$ be the 2--fold branched cover of a knot $K$ in $S^3$.  If
$H_1(M_K) = \zz_3 \oplus \zz_{3^{2i}} \oplus G$ where 3 does not divide the order of $G$ then
$K$ is not of order 4 in the concordance group.  This obstruction detects infinite new families of  knots that represent elements of order 4 in the algebraic
concordance group that are not of order 4 in concordance.

\end{abstract}

\maketitle

\section{Introduction}

Levine \cite{le1, le2} defined a  homomorphism $\phi$ from  the concordance group $\calc$ of knots in $S^3$, onto an algebraically defined group $\calg$, and further 
proved that 
$\calg  \cong
\oplus\   \zz ^\infty \oplus   {\zz_2}^\infty \oplus {\zz_4 }^\infty$.  It is a long standing conjecture that $\calc$ contains no torsion of order
other than two;  see for instance \cite{fm, k}. This paper continues our investigation of the possibility of
elements of order four in $\calc$.

For a knot $K \subset S^3$, let 
$M_K$ denote the 2--fold branched cover of $S^3$ branched over $K$, and for a prime $p$, let
$H_1(M_K)_p$ denote  the $p$--primary subgroup of $H_1(M_K)$; homology is with integer
coefficients throughout this paper. Our earlier work on 4--torsion,~\cite{ln1, ln2},  demonstrated the following.   

\begin{theorem}\label{oldthm} If 
  $H_1(M_K)_p \cong \zz_{p^k}$  for some prime $p \equiv 3 \mod 4$ with $k$ odd, then $K$ is of 
infinite order in $\calc$.
\end{theorem}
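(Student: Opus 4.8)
The plan is to argue by contradiction, combining the linking form of the $2$--fold branched cover with Casson--Gordon invariants. Suppose $K$ has finite order $n$ in $\calc$. Then $J:=\#_n K$ is slice, so $M_J=\#_n M_K$ bounds a rational homology $4$--ball $W$ --- namely the $2$--fold branched cover of $(B^4,\Delta)$ for a slice disk $\Delta$ for $J$. A standard consequence is that $P:=\ker\bigl(H_1(M_J)\to H_1(W)\bigr)$ is a metabolizer for the linking form $\lambda_J$, and since $\lambda_J$ is the orthogonal sum of its $q$--primary pieces and $P$ splits accordingly, $P_p:=P\cap H_1(M_J)_p$ is a metabolizer for the $p$--primary part. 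By hypothesis $H_1(M_J)_p\cong(\zz_{p^k})^n$ with linking form $\bigoplus^n\langle u/p^k\rangle$, where $\langle u/p^k\rangle$ is the linking form of $M_K$ on its $p$--part. Moreover the Casson--Gordon slice obstruction applies: for every prime--power--order character $\chi$ on $H_1(M_J)$ with $\chi|_P=0$, the Casson--Gordon invariant $\tau(J,\chi)$ is metabolic, hence zero.

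First I would exploit the arithmetic of $p\equiv3\bmod4$, for which $-1$ is a non-square mod $p$ and $W(\zz/p)\cong\zz_4$, generated by $\langle1\rangle$. Because $k$ is odd, d\'evissage identifies the Witt class of $\bigoplus^n\langle u/p^k\rangle$ with $n\langle u\rangle\in W(\zz/p)\cong\zz_4$; since $\langle u\rangle$ has order $4$ there (its rank mod $2$ is nonzero, so it is not the order--$2$ element $\langle1,1\rangle$), the existence of the metabolizer $P_p$ forces $4\mid n$. This already rules out $n\in\{1,2,3\}$, but \emph{not} $n=4$: the linking form sees only the class in $\zz_4$, which dies after four copies. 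So a genuinely finer invariant is needed, and this is exactly why the hypothesis cannot be weakened to ``$K$ has no finite order'' by linking--form considerations alone.

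The second ingredient is the Casson--Gordon invariant of a character of order exactly $p^k$. Fix $\chi_0\colon H_1(M_K)\to\zz_{p^k}\to\qq/\zz$ of order $p^k$ (it exists since $H_1(M_K)_p=\zz_{p^k}$). Using the structure of metabolizers of $\bigoplus^n\langle u/p^k\rangle$ inside $(\zz_{p^k})^n$ --- again constrained by $p\equiv3\bmod4$ --- I would produce characters $\chi$ on $H_1(M_J)$ trivial on $P$ whose restrictions to the $n$ summands are powers $\chi_0^{\,j_i}$ with a controlled multiset of exponents; additivity of $\tau$ under connected sum then converts each vanishing $\tau(J,\chi)=0$ into a linear relation among the invariants $\tau(K,\chi_0^{\,j})$. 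Running over all such $\chi$ gives a system of relations, and the aim is to combine it with the formulas of Litherland and Gilmer --- which express the Casson--Gordon signatures and the discriminant of $\tau(K,\chi_0)$ through the linking form of $M_K$ on its $p$--part and the Tristram--Levine signatures of $K$ --- to exhibit a definite combination of the $\tau(K,\chi_0^{\,j})$ that must vanish (by sliceness) yet is forced to be nonzero (by the hypothesis). The nonvanishing is precisely where $k$ odd is used, keeping the relevant class in $W(\zz/p)\cong\zz_4$ rather than in a $2$--torsion quotient. The contradiction shows no multiple of $K$ is slice, so $K$ has infinite order in $\calc$.

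The main obstacle is this last step. It is not enough that $\langle u\rangle$ is nonzero in $\zz_4$; that class is killed by multiplication by $4$, so the whole content is that its Casson--Gordon refinement survives arbitrarily many connected sums, \emph{uniformly over every knot $K$ satisfying the homological hypothesis}. Pinning down the correct multiset of characters trivial on the metabolizer, and proving that the resulting signature combination is nonzero for every such $K$ rather than merely for generic ones, is the technical heart of the argument; everything else is bookkeeping with linking forms and the standard properties of $\tau$.
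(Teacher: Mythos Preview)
The paper does not prove this theorem: it is quoted from the authors' earlier work \cite{ln1, ln2} and appears here only as background for the new result, Theorem~\ref{main}. There is therefore no proof in this paper against which to compare your proposal.

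On its own terms, your proposal is a plan rather than a proof, and you say as much in the final paragraph. The overall architecture is correct and matches what is actually done in \cite{ln1, ln2}: assume $nK$ is slice, restrict attention to the $p$--primary metabolizer inside $(\zz_{p^k})^n$, use additivity to convert the Casson--Gordon vanishing condition into linear relations among the invariants $\sigma(K,\chi_0^{\,j})$, and contradict the congruence $\sigma(K,\chi)\equiv\beta(x,x)\bmod\zz$ of Theorem~\ref{cglink}. But the step you flag as ``the main obstacle'' --- classifying which character tuples actually occur on metabolizer elements and showing that in \emph{every} case the resulting relations force some $\sigma$ to vanish that Theorem~\ref{cglink} says cannot vanish --- is precisely the technical content of the argument in \cite{ln1, ln2}. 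You have correctly located the heart of the proof but have not supplied it; what you have written is an accurate outline, not a proof.
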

This criterion is effective in ruling out the possibility of being order four for most low-crossing knots that represent four torsion in $\calg$.

Since we wrote~\cite{ln2}, several papers have appeared that apply new methods in smooth concordance theory (in particular Heegaard-Floer theory) to the study of 4--torsion.  This work includes~\cite{grs, jn, lisca}.  Given the continued interest in the structure of the concordance group, we here investigate the extension of our earlier work to the case in which $H_1(M_K)_p$ is not cyclic.  Working with primes greater than three greatly complicates the algebra; our main result is restricted to the case of $p = 3$.

\begin{theorem} \label{main}  If $H_1(M_K)_3 \cong \zz_3 \oplus {\zz_{3^{2i}}}$  then
$K$ is not of order 4 in $\calc$.
\end{theorem}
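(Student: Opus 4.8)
The plan is to suppose, for contradiction, that $K$ has order exactly $4$ in $\calc$, so that $4K$ is slice while $2K$ is not. From a slice disk for $4K$ one passes to its $2$--fold branched cover $W$, a rational homology $4$--ball with $\partial W=\#^4 M_K$. Writing $H=H_1(M_K)$ and $\lambda$ for its linking form, the subgroup $L=\ker\!\bigl(H_1(\#^4M_K)\to H_1(W)\bigr)\subset H^{\oplus 4}$ is a metabolizer for $\bigoplus^4\lambda$, and, by Casson--Gordon, $\sum_{j=1}^4\tau(K,\chi_j)=0$ for every character $\chi_1\oplus\cdots\oplus\chi_4$ of prime--power order that vanishes on $L$. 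Restricting to the $3$--primary part, $L_3$ is a metabolizer for $\bigoplus^4\lambda_3$ on $(H_3)^{\oplus 4}$ with $H_3\cong\zz_3\oplus\zz_{3^{2i}}$, and only characters of $3$--power order will be used.

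The core of the proof is an analysis of $L_3$. Since $3$ is odd, $\lambda_3$ is diagonalizable, $\lambda_3\cong\langle r/3\rangle\oplus\langle s/3^{2i}\rangle$, and the $\zz_3$--summand is canonically recovered, with its form, as the nondegenerate quotient of $H_3[3]$ by the radical of $\lambda_3|_{H_3[3]}$; doing this for the four copies produces a canonical nonsingular $\zz_3$--quadratic space $Q$ of rank $4$, isometric to $\langle r/3\rangle^{\oplus 4}$, into which $L_3$ induces a Lagrangian $\overline L$. Two features of the hypothesis drive the argument. First, $|H_3|=3^{2i+1}$ is an \emph{odd} power of $3$, so $\lambda_3$ has no metabolizer and the obstruction is forced entirely into the $\zz_3$--part, which is what makes $Q$ and $\overline L$ carry essentially all the information in $L_3$. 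Second, $-1$ is not a square modulo $3$, so $\langle r/3\rangle^{\oplus 4}$ has square discriminant and is hyperbolic, and its Lagrangians are sufficiently rigid that the order--$3$ characters of $H_3^{\oplus 4}$ detecting only the $\zz_3$--summands and vanishing on $L_3$ form a $2$--dimensional space of tuples $(a_1\hat\alpha,\dots,a_4\hat\alpha)$, where $\hat\alpha$ is a fixed order--$3$ character dual to the $\zz_3$--summand. Carrying out this classification of the metabolizers of $\bigoplus^4\lambda_3$ is the step I expect to be the main obstacle, and it is precisely here that larger primes would wreck the bookkeeping.

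With that in hand, each such tuple gives $\sum_j\tau(K,a_j\hat\alpha)=0$. Since $\tau(K,-\chi)$ is the Galois conjugate of $\tau(K,\chi)$ and the signature part of $\tau$ is unchanged under this conjugation, the signature terms collapse to $\bigl(\#\{j:a_j\neq 0\}\bigr)\,\sigma(K,\hat\alpha)=0$, forcing $\sigma(K,\hat\alpha)=0$; a parallel computation with the discriminant part of $\tau$ shows it too is trivial at $\hat\alpha$, so the full Casson--Gordon invariant of $K$ vanishes at the character detecting the $\zz_3$--summand.

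Finally one converts this into a contradiction. The cleanest route is to iterate: quotienting $H_3$ by the canonical isotropic $\zz_3$ sitting inside $3^{2i-1}H_3$ replaces $\zz_3\oplus\zz_{3^{2i}}$ by $\zz_3\oplus\zz_{3^{2i-2}}$, and the vanishing of $\tau(K,\hat\alpha)$ is exactly what lets the relevant Casson--Gordon data be pushed through this reduction; after $i$ steps one is in the cyclic situation $H_1(M_K)_3\cong\zz_3$ with $3\equiv 3\bmod 4$, where the mechanism behind Theorem~\ref{oldthm} forbids $K$ from having finite order, contradicting order $4$. Alternatively, one avoids the iteration by also using that $2K$, having order exactly $2$, is concordant to its reverse mirror image; the additional Casson--Gordon constraint this produces, combined with the vanishing above, contradicts the non--sliceness of $2K$ directly. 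In either route the hypothesis is used twice: once so that the linking--form and metabolizer bookkeeping works at the prime $3$, and once through the \emph{parity} of the exponent $2i$, which is what permits the $\zz_{3^{2i}}$--summand to be cancelled and the problem reduced to the already understood cyclic case.
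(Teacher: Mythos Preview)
Your proposal shares the paper's starting point---assume $4K$ is slice, pick a metabolizer $L_3$ for $\bigoplus^4\lambda_3$, and exploit Casson--Gordon vanishing---but the central mechanism you describe does not work. You claim that the order--$3$ characters of $H_3^{\oplus 4}$ that detect only the $\zz_3$--summands and vanish on $L_3$ form a $2$--dimensional space. Under the linking form, such a character corresponds to an element of $L_3^\perp=L_3$ lying in the $(\zz_3)^4$--subgroup generated by the $v$--generators (that is, with $\zz_{3^{2i}}$--component zero). In the main (rank--$4$) case this intersection is trivial: after normalization, a generating set for $L_3$ has $\zz_{3^{2i}}$--block
\[
\begin{pmatrix} 1&0&w_{1,3}&w_{1,4}\\0&1&w_{2,3}&w_{2,4}\\0&0&3^{2i-1}&0\\0&0&0&3^{2i-1}\end{pmatrix},
\]
and no nonzero combination of the rows kills all four columns over $\zz_{3^{2i}}$. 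So the space of characters you want to use is zero, and your collapse to $(\#\{j:a_j\ne 0\})\,\sigma(K,\hat\alpha)=0$ never gets started. Passing instead to your quotient $Q$ does produce a Lagrangian $\overline L$, but its preimages in $L_3[3]$ are elements with nonzero $\zz_{3^{2i}}$--part, so the associated characters are \emph{not} multiples of a single $\hat\alpha$.

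What the paper actually does is work with exactly those order--$3$ elements of $L_3$: they have $\zz_{3^{2i}}$--part of the shape $3^{2i-1}(\ast,\ast,\ast,\ast)$, and by varying this part one manufactures elements of $L_3$ that yield relations among three distinct invariants $\sigma_{1,0}$, $\sigma_{1,3^{2i-1}}$, $\sigma_{1,2\cdot 3^{2i-1}}$ rather than a single $\sigma(K,\hat\alpha)$. A case analysis of the resulting system forces $\sigma_{1,0}=0$. The contradiction is then immediate from Theorem~\ref{cglink}, which gives $\sigma_{1,0}\equiv\beta(v,v)=\pm 1/3\bmod\zz$; no iteration down to the cyclic case, and no appeal to $2K$, is needed. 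Your proposed endgame is thus both unnecessary and, as sketched, not well defined: it is never explained how the vanishing of one Casson--Gordon invariant would let you replace $H_1(M_K)_3$ by a smaller group, and in fact once $\sigma_{1,0}=0$ is in hand the argument is already finished.
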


We will also present  applications of this result, describing new infinite   families of knots that are of algebraic order four but do not represent 4--torsion in $\calc$.  A simple, easily stated application is the following, where  the  Alexander polynomial of a knot $K$ is denoted
$\Delta_{K}(t)$:

\begin{corollary} If $\Delta_K(t)$ is quadratic and $\Delta_K(-1) = 27m$ where 3 does not divide $m$, then $K$ is of order 4 in $\calg$  but not in $\calc$.  
\end{corollary}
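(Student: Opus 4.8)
Both halves use one preliminary observation: since $\Delta_K(-1) = 27m$ with $3$ not dividing $m$, this value is not a perfect square, so $\Delta_K(t)$ does not satisfy the Fox--Milnor condition $\Delta_K(t) \doteq f(t)f(t^{-1})$. Hence $\Delta_K$ is irreducible over $\qq$ (a reducible symmetric quadratic would have Fox--Milnor form), and $K$ is not algebraically slice. The plan is then to show separately that $K$ is not of order $4$ in $\calc$ and that $K$ is of order $4$ in $\calg$.

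For the statement about $\calc$: because $\Delta_K$ has degree $2$, the abelian group $H_1(M_K)$ is generated by at most two elements --- for instance via the isomorphism $H_1(M_K) \cong H_1(X_\infty)/(1+t)H_1(X_\infty)$, where $X_\infty$ is the infinite cyclic cover, together with the fact that the Alexander module of a knot with quadratic Alexander polynomial is carried by a fractional ideal in the order $\zz[t,t^{-1}]/(\Delta_K(t))$ and so is generated by at most two elements. Its order is $|\Delta_K(-1)| = 27|m|$, so $H_1(M_K)_3$ has order $27$ and at most two generators; thus $H_1(M_K)_3 \cong \zz_{27}$ or $\zz_3 \oplus \zz_9$. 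In the first case Theorem~\ref{oldthm}, applied with $p = 3 \equiv 3 \mod 4$ and $k = 3$, shows that $K$ has infinite order in $\calc$. In the second case $\zz_3 \oplus \zz_9 = \zz_3 \oplus \zz_{3^2}$, so Theorem~\ref{main} with $i = 1$ shows that $K$ is not of order $4$ in $\calc$. Either way, $K$ is not of order $4$ in $\calc$.

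For the statement about $\calg$: normalize $\Delta_K$ by $\Delta_K(1) = 1$, so that $\Delta_K(-1) = 27m$, and take $m > 0$ (if $m < 0$ the signature of $K$ is nonzero and $K$ already has infinite order in $\calg$). Since $\Delta_K(-1) > 0$, the two roots of $\Delta_K$ are real and different from $\pm 1$, so every Tristram--Levine signature of $K$ vanishes, $\phi(K)$ is a torsion class, and by Levine's computation of $\calg$ the order of $\phi(K)$ is $1$, $2$, or $4$; the value $1$ is excluded because $K$ is not algebraically slice. To decide between $2$ and $4$ I would pass to the real quadratic field $F = \qq[t]/(\Delta_K(t)) \cong \qq(\sqrt{\Delta_K(-1)}) = \qq(\sqrt{27m})$ with its Galois involution. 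Since $\Delta_K$ is irreducible of degree $2$, the algebraic concordance class of $K$ is carried by a rank-one Hermitian form over this field, and a short computation in the Witt group of such forms shows that $2\phi(K) = 0$ if and only if $-1$ is a norm from $F$, i.e.\ if and only if $x^2 - 27m\, y^2 = -1$ has a rational solution. Because $3 \equiv 3 \mod 4$ divides $27m$ to an odd power, the Hilbert symbol $(-1, 27m)_3$ equals $\left(\tfrac{-1}{3}\right) = -1$, so $-1$ is not a norm from $F$. Hence $2\phi(K) \neq 0$, and $\phi(K)$ has order exactly $4$.

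The homological bookkeeping in the second paragraph and the Fox--Milnor and signature remarks are routine, though one should check the ``at most two generators'' claim with some care when $K$ has genus greater than one, and keep straight which of Theorems~\ref{oldthm} and~\ref{main} is being invoked according to whether $H_1(M_K)_3$ is cyclic. The step that carries the real content is the dichotomy between algebraic order $2$ and order $4$: this is where Levine's classification of $\calg$ (or Stoltzfus's refinement of it) must be brought in to translate the problem into the arithmetic of $F$, after which the prime $3$ does the work.
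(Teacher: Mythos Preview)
Your argument for the concordance half is exactly the paper's: the quadratic Alexander polynomial forces $H_1(M_K)_3$ to have at most two generators, so with order $27$ it is either $\zz_{27}$ (handled by Theorem~\ref{oldthm}) or $\zz_3 \oplus \zz_9$ (handled by Theorem~\ref{main}). The paper states this as Corollary~\ref{application} with the same case split, and like you it asserts the rank bound without a detailed proof, so your level of justification matches.

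For the algebraic half, the paper simply cites Levine's criterion: a knot with $\Delta_K(t) = kt^2 - (2k+1)t + k$ has finite algebraic order, and order exactly $4$ if and only if some prime $p \equiv 3 \pmod 4$ divides $4k+1$ to odd exponent. Your argument unpacks this via Tristram--Levine signatures and the norm condition in the associated quadratic field, which is a legitimate and more self-contained route. One slip to fix: with the normalization $\Delta_K(1) = +1$ and $\Delta_K(-1) = 27m > 0$, the discriminant of the quadratic is $-27m < 0$, so the roots lie on the unit circle rather than being real. The correct condition for real roots is $\Delta_K(1)\,\Delta_K(-1) < 0$; the paper's normalization has $\Delta_K(1) = -1$, which is what you want. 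With that correction your signature argument goes through, and the Hilbert-symbol computation $(-1,27m)_3 = -1$ correctly detects order $4$.
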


While the  simplest application of our main result  is to prove that particular  knots that are
of algebraic order four are not of order four in $\calc$, we are more interested in the fact that this obstruction applies to   entire $S$--equivalences classes of knots, and thus the calculation of the obstruction is purely algebraic, based on simple classical algorithms from knot theory.

   Of further interest is that the result applies in the topological, locally flat category.  The techniques we use are based on Casson-Gordon theory, which initially applied only in the smooth category (see, for example, \cite{cg1, cg2}), but  by \cite{fq} the techniques extend to the
topological locally flat category.   With regards to examples  taken from low-crossing prime knots, all   algebraic order four knots that have been shown to be of order greater than 4  smoothly can be shown to have order greater than four topologically.

  Basic results in knot theory can be
found in \cite{ro} or \cite{go}.  Tables of low crossing knots
and their algebraic  and concordance orders can be found in \cite{cl}.

\section{\bf Casson-Gordon invariants and linking forms}

  Let
$\chi$ denote a homomorphism from
$H_1(M_K)$ to
$\zz_{p^k}$, for some prime $p$.  The Casson-Gordon invariant 
$\sigma(K,\chi)$ is a rational invariant of the pair
$(K,\chi)$.  (See \cite{cg1, cg2}. In the original paper, [CG1], this
invariant is denoted $\sigma_1 \tau(K,\chi)$, and $\sigma$ is used for
a closely related invariant.)

On the rational homology sphere  $M_K$  there is a nonsingular
symmetric linking form, $\beta :H_1(M_K) \rightarrow \qq / \zz$. For a subgroup $M
\subset H_1(M_K)$ we let $M^\perp = \{ x \in H_1(M_K)\ |\ \beta(x,m) = 0\ \forall\ m
\in M\}$.
  The main result
in [CG1]
concerning Casson-Gordon invariants and slice knots that we will be
using is the following:

\begin{theorem} \label{CG}
If $K$ is slice there is a subgroup 
$M\subset H_1(M_K) $ with $M = M^\perp$ and
$\sigma(K,\chi) = 0$ for all prime power order
$\chi$ vanishing on $M$.\end{theorem}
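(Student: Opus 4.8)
The statement above is the classical Casson--Gordon sliceness obstruction, and the plan is to carry out their argument in the stated form. Assume $K$ is slice and fix a slice disk $D\subset B^4$. Let $W$ be the double cover of $B^4$ branched over $D$; then $\partial W=M_K$, and since $H_1(B^4\setminus D)\cong\zz$ is generated by a meridian of $D$, a transfer and Mayer--Vietoris computation shows that $W$ is a rational homology $4$--ball (indeed a $\zz[1/2]$--homology ball, which since $|H_1(M_K)|$ is odd is all that is needed here). Set
\[
 M\ :=\ \ker\bigl(H_1(M_K)\longrightarrow H_1(W)\bigr).
\]
The first step is to show $M=M^\perp$ for the linking form $\beta$. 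The inclusion $M\subseteq M^\perp$ holds because two classes in $M$ bound $2$--chains in $W$ and $H_2(W;\qq)=0$, so their linking pairing, computed in $W$, vanishes; the reverse inclusion together with $|M|^2=|H_1(M_K)|$ follows from Poincar\'e--Lefschetz duality for the pair $(W,M_K)$ and the universal coefficient theorem. This is the torsion linking form version of ``half lives, half dies''.

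Next fix a prime-power order character $\chi\colon H_1(M_K)\to\zz_{p^k}$ vanishing on $M$. Since $M$ is precisely $\ker(H_1(M_K)\to H_1(W))$, the map $\chi$ factors through the injection $H_1(M_K)/M\hookrightarrow H_1(W)$; composing with $\zz_{p^k}\hookrightarrow\qq/\zz$ and using injectivity of $\qq/\zz$, it extends to a character of $H_1(W)$ valued in a finite cyclic $p$--group, and the associated cyclic cover $\widetilde W\to W$ has for its boundary some number of copies of the $p^k$--fold cover $\widetilde{M}_K\to M_K$ determined by $\chi$. Recall now that $\sigma(K,\chi)$ is by definition the normalized signature defect $\tfrac{1}{r}\bigl(\mathrm{sign}_\zeta(V)-\mathrm{sign}(V)\bigr)$ of any compact oriented $4$--manifold $V$ with a compatible cyclic action bounding $r$ copies of $\widetilde{M}_K$, where $\zeta=e^{2\pi i/p^k}$ and $\mathrm{sign}_\zeta$ denotes the signature of the $\zeta$--twisted intersection form; its well-definedness is exactly the Novikov additivity and $G$--signature input of \cite{cg1}. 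Taking $V=\widetilde W$: the ordinary signature $\mathrm{sign}(\widetilde W)$ vanishes since signature is multiplicative in finite covers and $W$ is a rational homology ball, while the $\zeta$--twisted intersection form on $H_2(\widetilde W;\cc)$ is metabolic --- a Lagrangian being the image of $H_2(\widetilde W,\partial\widetilde W;\cc)$ --- by twisted Poincar\'e--Lefschetz duality and the fact that $W$ has the rational homology of a point. Hence both terms vanish and $\sigma(K,\chi)=0$, as claimed.

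The step I expect to be the main obstacle is the extension in the previous paragraph. Because $\zz_{p^k}$ is not an injective $\zz$--module, $\chi$ need not extend over $H_1(W)$ with values again in $\zz_{p^k}$, so one is forced into a larger cyclic $p$--group; one must then keep careful track of how many copies of $\widetilde{M}_K$ the resulting cover of $W$ bounds and confirm that the normalization and the chosen primitive root of unity are consistent with the definition of $\sigma(K,\chi)$. This bookkeeping is the only genuinely delicate point; the remaining ingredients --- the metabolizer property and the vanishing of the twisted signature of a homology ball --- are routine once the setup is in place, and the whole argument is the standard Casson--Gordon package. To obtain the statement in the topological locally flat category one replaces the smooth branched cover and transversality constructions by their topological analogues, available through \cite{fq}.
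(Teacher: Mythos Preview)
The paper does not prove this theorem; it is quoted as ``the main result in [CG1]'' and used thereafter as a black box, so there is nothing in the paper to compare your proposal against.

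Your outline is the standard Casson--Gordon argument and its architecture is correct: form the double branched cover $W$ of $(B^4,D)$, set $M=\ker\bigl(H_1(M_K)\to H_1(W)\bigr)$, verify $M=M^\perp$ by Poincar\'e--Lefschetz duality for the pair $(W,M_K)$, extend $\chi$ over $H_1(W)$ into a possibly larger cyclic $p$--group, and compute the signature defect from the resulting cover $\widetilde W$. The extension/normalization bookkeeping you flag is indeed the place where care is required.

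One point to tighten: the clause ``a Lagrangian being the image of $H_2(\widetilde W,\partial\widetilde W;\cc)$'' is not well formed, since the long exact sequence runs $H_2(\widetilde W)\to H_2(\widetilde W,\partial\widetilde W)$ rather than the reverse, and in any case the fact that $W$ is a rational homology ball does not by itself force the $\zeta$--eigenspace of $H_2(\widetilde W;\cc)$ to carry a metabolic form. In \cite{cg1} this is the content of a separate lemma: one produces a deck--transformation--invariant metabolizer by passing through the infinite cyclic cover and a limit argument. That step is genuine input, not routine duality, so your sketch should invoke it rather than assert metabolicity directly.
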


\noindent

A subgroup $M \subset \ H_1(M_K)$ satisfying $M = M^\perp$ is called a {\it
metabolizer}.  It is useful to recall the following result.

\begin{lemma}\label{H/M=M} For a metabolizer    $M \subset H_1(M_K)$,
$H_1(M_K)/M \cong M$ and in particular $|M|^2 = |H_1(M_K)|$.

\end{lemma}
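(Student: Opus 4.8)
The plan is to deduce this purely formally from nonsingularity of the linking form together with Pontryagin duality for finite abelian groups. Write $H = H_1(M_K)$, which is finite because $M_K$ is a rational homology sphere, and for a finite abelian group $A$ write $\widehat{A} = \mathrm{Hom}(A, \qq/\zz)$. Nonsingularity of $\beta$ says exactly that the adjoint map $\mathrm{ad}_\beta \colon H \to \widehat{H}$, $x \mapsto \beta(x,-)$, is an isomorphism. So the whole argument reduces to tracking the subgroup $M^\perp$ through this identification.

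First I would look at the restriction map $\rho \colon \widehat{H} \to \widehat{M}$ induced by the inclusion $M \hookrightarrow H$. Since $\qq/\zz$ is divisible, it is an injective $\zz$-module, and hence $\rho$ is surjective. Composing, $\rho \circ \mathrm{ad}_\beta \colon H \to \widehat{M}$ is a surjection, and its kernel is by definition precisely the set of $x \in H$ with $\beta(x,m)=0$ for all $m \in M$, i.e. the subgroup $M^\perp$. The first isomorphism theorem then gives $H/M^\perp \cong \widehat{M}$. Invoking the standard fact that a finite abelian group is (noncanonically) isomorphic to its own Pontryagin dual, $\widehat{M} \cong M$, we obtain $H/M^\perp \cong M$ and, on taking orders, $|H| = |M^\perp|\,|M|$. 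Specializing to the case $M = M^\perp$ of a metabolizer yields both $H_1(M_K)/M \cong M$ and $|M|^2 = |H_1(M_K)|$.

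There is no genuine obstacle here — the statement is elementary linear algebra over $\zz$ — so the only thing warranting care is to record that $H_1(M_K)$ is finite (so that all the dual groups in play are again finite and of the same order as the originals) and to pin down exactly where divisibility of $\qq/\zz$ is used, namely in the surjectivity of the restriction map $\rho$. Everything else is the first isomorphism theorem and the self-duality of finite abelian groups.
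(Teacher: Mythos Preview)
Your argument is correct and is essentially the paper's own proof: the paper simply records the short exact sequence $0 \to M^\perp \to H_1(M_K) \to \hom(M,\qq/\zz) \to 0$ and invokes $\hom(M,\qq/\zz)\cong M$, and what you have written is exactly an unpacking of why that sequence exists and is exact (surjectivity on the right from injectivity of $\qq/\zz$, kernel identification on the left from the definition of $M^\perp$). There is no substantive difference in approach.
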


\begin{proof} This follows quickly from the following exact sequence  $$0 \to M^\perp
\to H_1(M_K) \to
\hom(M,\qq / \zz) \to 0,$$ the fact that
$M^\perp = M$, and the observation that since $M$ is a  finite abelian group,
$\hom(M,\qq /
\zz)
\cong M$.

\end{proof}
 We will need Gilmer's additivity theorem  \cite{gl}, a vanishing result
proved by Litherland \cite[Corollary
B2]{lit}, and a simple fact that follows immediately from the definition of the
Casson--Gordon invariant.

\begin{theorem}  \label{Gi}
If $\chi_1$ and $\chi_2$ are defined on $M_{K_1}$
and   $M_{K_2}$, respectively, then we have
$\sigma( K_1\ \#\ K_2, \chi_1\ \oplus \
\chi_2) =
\sigma(K_1,\chi_1) + \sigma(K_2,\chi_2)$. \end{theorem}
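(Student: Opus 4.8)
The plan is to derive additivity directly from the bordism-theoretic definition of the Casson--Gordon invariant, following Gilmer \cite{gl}. Recall that $\sigma(K,\chi) = \sigma_1\tau(K,\chi)$ is computed by choosing an integer $r>0$ and a compact oriented $4$--manifold $W$ whose boundary is $r$ disjoint copies of $M_K$, together with an extension $\tilde\chi\colon H_1(W)\to \zz_{p^k}$ restricting to $\chi$ on each boundary component (possible because the relevant bordism group $\Omega_3(B\zz_{p^k})$ is finite), and then setting $\sigma(K,\chi) = \frac1r\big(\sigma^{\tilde\chi}(W) - \sigma(W)\big)$, where $\sigma^{\tilde\chi}(W)$ is the signature of the intersection form twisted by the flat line bundle associated to $\tilde\chi$ composed with $\zz_{p^k}\hookrightarrow U(1)$. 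Well-definedness rests on the fact that for a \emph{closed} $4$--manifold $X$ carrying a $\zz_{p^k}$--character, the Atiyah--Patodi--Singer flat-bundle signature theorem (the Chern character of a flat bundle equals its rank) gives $\sigma^{\tilde\chi}(X) = \sigma(X)$, so the signature defect vanishes on closed manifolds. The entire proof will be an exploitation of this vanishing.

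First I would record the topological input: the double branched cover of a connected sum is the connected sum of the double branched covers, $M_{K_1\# K_2}\cong M_{K_1}\# M_{K_2}$, and under this identification $H_1(M_{K_1\# K_2}) = H_1(M_{K_1})\oplus H_1(M_{K_2})$ carries exactly the character $\chi_1\oplus\chi_2$. Next I would construct a cobordism $V$ realizing $[M_{K_1}\# M_{K_2},\,\chi_1\oplus\chi_2] = [M_{K_1},\chi_1] + [M_{K_2},\chi_2]$ in $\Omega_3(B\zz_{p^k})$: take $(M_{K_1}\sqcup M_{K_2})\times[0,1]$ and attach a single $1$--handle joining the two top boundary components, so that $\partial V = (M_{K_1}\sqcup M_{K_2})\ \sqcup\ -(M_{K_1}\# M_{K_2})$. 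The character $\chi_1\oplus\chi_2$ extends over $V$ since $H_1(V)=H_1(M_{K_1})\oplus H_1(M_{K_2})$ and restricts correctly on each boundary piece, and $V$ (being built from products and a $1$--handle) has no second rational or twisted homology contributing to signature, so $\sigma(V)=\sigma^{\tilde\chi}(V)=0$.

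Then I would assemble a closed manifold. Choose $r$ with $r[M_{K_i},\chi_i]=0$ in $\Omega_3(B\zz_{p^k})$ for $i=1,2$ (hence also $r[M_{K_1}\# M_{K_2},\chi_1\oplus\chi_2]=0$), pick bounding pairs $(W_1,\tilde\chi_1)$, $(W_2,\tilde\chi_2)$, $(W_{12},\widetilde{\chi_1\oplus\chi_2})$ for $r$ copies of $M_{K_1}$, $M_{K_2}$, $M_{K_1}\# M_{K_2}$ respectively, and cap off the boundary of $r$ parallel copies of $V$ using $W_1$, $W_2$, and $-W_{12}$, glued with the compatible characters and matching orientations. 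The result is a closed $4$--manifold $X$ with a $\zz_{p^k}$--character, so $\sigma^{\tilde\chi}(X)-\sigma(X)=0$. Applying Novikov additivity to both the untwisted and the $\tilde\chi$--twisted signatures — legitimate because all gluings are along \emph{closed} $3$--manifolds, so no Wall correction term arises — and using $\sigma(V)=\sigma^{\tilde\chi}(V)=0$, this identity becomes
$$\big(\sigma^{\tilde\chi}(W_1)-\sigma(W_1)\big) + \big(\sigma^{\tilde\chi}(W_2)-\sigma(W_2)\big) - \big(\sigma^{\tilde\chi}(W_{12})-\sigma(W_{12})\big) = 0 .$$
Dividing by $r$ yields exactly $\sigma(K_1,\chi_1)+\sigma(K_2,\chi_2) = \sigma(K_1\# K_2,\chi_1\oplus\chi_2)$.

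The step I expect to be the main obstacle is the bookkeeping in the gluing: matching orientations and boundary components so that $X$ is genuinely closed and so that the twisted intersection forms of the pieces add up as claimed, together with invoking the correct form of Novikov additivity for twisted signatures. Once the flat-bundle vanishing on closed manifolds and the cobordism $V$ are in place, the remainder is formal, and in fact the same template proves the more general statement in \cite{gl} where $\chi_1$ and $\chi_2$ need not have equal prime-power orders (take the common target $\zz_{p^k}$ with $k$ maximal).
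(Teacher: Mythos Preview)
The paper does not prove this theorem; it merely records it as Gilmer's additivity theorem and cites \cite{gl}. So there is no proof in the paper to compare against.

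Your argument is the standard bordism-theoretic derivation and is correct in outline and in detail. The key ingredients---$M_{K_1\#K_2}\cong M_{K_1}\#M_{K_2}$, the elementary $1$--handle cobordism $V$ with $H_2(V;\qq)=0$ and extended character, the choice of a common multiple $r$ so that all three pairs bound in $\Omega_3(B\zz_{p^k})$, Novikov additivity for both ordinary and twisted signatures along closed separating $3$--manifolds, and the vanishing $\sigma^{\tilde\chi}(X)=\sigma(X)$ for closed $X$ carrying a flat unitary line bundle---are exactly the ones Gilmer uses, and your assembly of them is accurate. The step you flag as the main obstacle (orientation and boundary bookkeeping, and the applicability of Novikov additivity in the twisted setting) is routine once stated carefully, precisely because all gluings are along full closed boundary components so no Wall correction terms appear; you have already identified this. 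One small cosmetic point: rather than invoking Atiyah--Patodi--Singer for the closed vanishing, it is enough to cite the Atiyah--Singer $G$--signature theorem (or, for $\zz_{p^k}$, the explicit computation in \cite{cg1}) that the twisted signature of a closed $4$--manifold with a flat $U(1)$--bundle equals its ordinary signature.
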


\begin{theorem} If $\chi$ is the trivial character, then $\sigma(K,\chi) = 0$.
\end{theorem}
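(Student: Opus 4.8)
The plan is to unwind the definition of the Casson--Gordon invariant and observe that a trivial character produces no twisting, so that the signature defect defining $\sigma(K,\chi)$ collapses to zero.

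First I would recall the construction of $\sigma(K,\chi)$. A character $\chi \colon H_1(M_K) \to \zz_{p^k}$ determines, via the inclusion $\zz_{p^k} \hookrightarrow \qq/\zz$ and exponentiation, a one--dimensional unitary representation of $H_1(M_K)$, and hence a class in the finite bordism group $\Omega_3(B\zz_{p^k})$. Consequently some multiple $r(M_K,\chi)$ bounds a pair $(W,\tilde\chi)$, where $W$ is a compact oriented 4--manifold whose boundary is $r$ copies of $M_K$ and $\tilde\chi\colon H_1(W)\to \zz_{p^k}$ restricts to $\chi$ on each boundary component. The invariant is then the signature defect
$$\sigma(K,\chi) \;=\; \frac{1}{r}\bigl(\operatorname{sign}_{\tilde\chi}(W) - \operatorname{sign}(W)\bigr),$$
where $\operatorname{sign}_{\tilde\chi}(W)$ is the signature of the Hermitian intersection form on $H_2(W;\cc_{\tilde\chi})$, the homology with coefficients in the $\cc$--local system determined by $\tilde\chi$.

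Next I would specialize to the case $\chi = 0$. The trivial character extends over every bounding 4--manifold, so I may take $r=1$ and let $W$ be any 4--manifold with $\partial W = M_K$; such a $W$ exists because $\Omega_3^{SO} = 0$. With $\tilde\chi$ trivial, the local system $\cc_{\tilde\chi}$ is the constant system $\cc$, whence $H_2(W;\cc_{\tilde\chi}) = H_2(W;\cc) = H_2(W;\rr)\otimes_{\rr}\cc$, and the twisted Hermitian intersection form is precisely the complexification of the ordinary real intersection form. Since the signature is unchanged under this extension of scalars, $\operatorname{sign}_{\tilde\chi}(W) = \operatorname{sign}(W)$, and substituting into the displayed formula gives $\sigma(K,\chi)=0$.

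The only point requiring care---and it is the mild obstacle here---is the verification that, for the trivial character, the twisted intersection form really does reduce to the complexification of the untwisted one, so that the two signatures agree; this is immediate once one checks that the twisting in the chain complex of the induced cover disappears when $\tilde\chi$ is trivial. Because $\sigma(K,\chi)$ is well defined, the answer is independent of the choices of $W$ and $r$, so the conclusion does not depend on our having taken $r=1$. This is exactly the sense in which the vanishing \emph{follows immediately from the definition}, as asserted in the statement.
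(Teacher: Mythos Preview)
Your argument is correct: for the trivial character the associated flat $\cc$--bundle is trivial, so the twisted intersection form is the complexification of the ordinary one, the two signatures agree, and the defect vanishes. This is the standard unwinding of the Casson--Gordon definition.

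One small remark on attribution. In the paper the three auxiliary results are introduced, in order, as Gilmer's additivity theorem, ``a vanishing result proved by Litherland [Corollary B2],'' and ``a simple fact that follows immediately from the definition.'' Matching these to the three displayed theorems, the trivial--character vanishing is the one credited to Litherland, while the ``immediate from the definition'' tag refers to the symmetry $\sigma(K,\chi)=\sigma(K,-\chi)$. The paper supplies no proof for either---it simply records them as known---so there is nothing to compare your argument against. Your last sentence slightly misattributes which result the paper calls ``immediate from the definition,'' but of course your computation shows that the trivial--character vanishing is equally immediate, and your proof stands on its own.
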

\begin{theorem} For every character  $\chi$, 
$\sigma(K,\chi) = \sigma(K,-\chi)$.
\end{theorem}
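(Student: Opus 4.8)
The plan is to go straight to the definition of $\sigma(K,\chi)$ as the signature defect of a twisted intersection form, and to check that passing from $\chi$ to $-\chi$ replaces that form by its entrywise complex conjugate, which has the same signature.

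Recall the construction. Since $\chi\colon H_1(M_K)\to \zz_{p^k}$ has prime-power order, some multiple $r$ of the pair $(M_K,\chi)$ bounds: there is a compact oriented $4$--manifold $W$ whose boundary is $r$ copies of $M_K$, together with a homomorphism $\tilde\chi\colon H_1(W)\to \zz_{p^k}$ restricting to $\chi$ on each boundary component. Fix $\zeta = e^{2\pi i/p^k}$ and let $\cc_\zeta$ denote $\cc$ regarded as a $\zz[\zz_{p^k}]$--module with the generator acting by multiplication by $\zeta$; pulling back along $\pi_1(W)\to\zz_{p^k}$ gives a local system on $W$, hence a Hermitian intersection form on $H_2(W;\cc_\zeta)$. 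Then $\sigma(K,\chi)=\tfrac1r\bigl(\operatorname{sign}_\zeta W-\operatorname{sign} W\bigr)$, where $\operatorname{sign}_\zeta W$ is the signature of the twisted form and $\operatorname{sign} W$ is the ordinary signature of $W$.

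First I would note that $-\chi$ is the composite of $\chi$ with the inversion automorphism $x\mapsto -x$ of $\zz_{p^k}$; in particular it has the same kernel and the same order as $\chi$, so the very same $W$ and the same $r$ can be used. Second, the local system attached to $(-\chi,\zeta)$ coincides with the one attached to $(\chi,\bar\zeta)$, where $\bar\zeta=\zeta^{-1}$, and $C_*(W;\cc_{\bar\zeta})$ is obtained from $C_*(W;\cc_\zeta)$ by conjugating coefficients; hence, in a suitable basis, the twisted intersection form for $-\chi$ is the complex conjugate $\overline{A}$ of the twisted form $A$ for $\chi$. Third, for a Hermitian matrix one has $\overline{A}=A^{\mathsf T}$, which has the same (real) eigenvalues as $A$ and hence the same signature; equivalently, complex conjugation is a conjugate-linear isometry and so preserves the numbers of positive and negative squares. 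Since $\operatorname{sign} W$ and $r$ are unchanged, $\sigma(K,-\chi)=\sigma(K,\chi)$.

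There is no real geometric content here; the only point requiring attention is the bookkeeping with Hermitian conventions — which of $\chi$, $\zeta$, and their conjugates enters the local system, and in which variable the twisted intersection form is conjugate-linear. Once these choices are fixed consistently, the identification of the $-\chi$ form with $\overline{A}$ is forced, and the signature statement drops out; this is exactly why it can be recorded as an immediate consequence of the definition.
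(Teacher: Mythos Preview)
Your argument is correct and is precisely an unpacking of what the paper means when it says this ``follows immediately from the definition of the Casson--Gordon invariant''; the paper records the statement without further proof, and your signature-of-the-conjugate-form computation is the standard justification.
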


We will also need to use the relationship between  the Casson--Gordon
invariant of a knot   and the linking form on its 2--fold branched cover, as developed
in \cite{ln1, ln2}.

\begin{theorem} \label{cglink} If $\chi \colon H_1(M_K) \to \zz_{p^r}$
is a character obtained by linking with the element $x \in H $,
then $\sigma(K,\chi) \equiv \beta(x,x)\ \mbox{\rm modulo}\  \zz$.   
\end{theorem}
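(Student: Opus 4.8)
The plan is to express $\sigma(K,\chi)$ as a signature defect and then observe that its reduction modulo $\zz$ is a bordism invariant, which can be pinned down by a single model computation. For a character $\chi$ of prime power order $p^r$, recall that $\sigma(K,\chi)$ is computed (Casson--Gordon \cite{cg1}, with the refinements in \cite{gl, lit}) by choosing a compact oriented $4$--manifold $W$ whose boundary is $m$ disjoint copies of $M_K$, together with $\tilde\chi\colon H_1(W)\to\zz_{p^r}$ restricting to $\chi$ on each boundary component, and setting
\[ \sigma(K,\chi)=\tfrac1m\bigl(\mathrm{sign}_\omega(W)-\mathrm{sign}(W)\bigr)+(\text{integer correction terms}), \]
where $\mathrm{sign}_\omega(W)$ is the signature of the intersection form on $H_2(W;\cc)$ twisted by $\tilde\chi$ and a fixed primitive $p^r$--th root of unity $\omega$; the correction terms are irrelevant here since the statement is modulo $\zz$. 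The key consequence is this: if $(M_K,\chi)$ itself bounds such a $(W,\tilde\chi)$ (that is, with $m=1$), then $\mathrm{sign}_\omega(W)$ and $\mathrm{sign}(W)$ are ordinary integers, so $\sigma(K,\chi)\equiv0\bmod\zz$. Combining this with Gilmer's additivity (Theorem~\ref{Gi}), with $\sigma(\overline K,\chi)=-\sigma(K,\chi)$ for the mirror image $\overline K$, and with $M_{K_1\#K_2}\cong M_{K_1}\#M_{K_2}$, one sees that $\sigma(K,\chi)\bmod\zz$ depends only on the class of $(M_K,\chi)$ in the oriented bordism group $\Omega_3^{SO}(B\zz_{p^r})$.

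Next, identify this bordism group: since $\Omega_i^{SO}(\mathrm{pt})=0$ for $i=1,2,3$, the Atiyah--Hirzebruch spectral sequence gives $\Omega_3^{SO}(B\zz_{p^r})\cong H_3(B\zz_{p^r};\zz)\cong\zz_{p^r}$, with a generator carried by $(L(p^r,1),\chi_0)$ for the standard character $\chi_0$. Thus $\chi\mapsto\sigma(K,\chi)\bmod\zz$ is, in effect, a homomorphism $\zz_{p^r}\to\qq/\zz$ evaluated on $[(M_K,\chi)]$. On the other hand, for a rational homology sphere $M$ the assignment $(M,\chi)\mapsto\beta(x,x)$, where $x\in H_1(M)$ is the element with $\chi=\beta(x,-)$, is well known to be a bordism invariant realizing the standard isomorphism $\zz_{p^r}\xrightarrow{\ \sim\ }\tfrac1{p^r}\zz/\zz$ (every bordism class has rational homology sphere representatives, obtained by surgery, and the value $\beta(x,x)$ is unchanged). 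So the theorem reduces to the equality of two homomorphisms $\zz_{p^r}\to\qq/\zz$, and hence to checking that they agree on one generator, for instance on $(M_{T(2,p^r)},\chi_0)$, noting that $M_{T(2,p^r)}=\pm L(p^r,1)$.

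That last verification is the main obstacle. Concretely, one must check that $\sigma(T(2,p^r),\chi_0)\equiv\beta(x,x)\bmod\zz$, and not some nontrivial unit multiple of $\beta(x,x)$. The Seifert form of $T(2,p^r)$ is classical, so $\sigma(T(2,p^r),\chi_0)$ is accessible through Litherland's formula \cite{lit} while the linking form of $\pm L(p^r,1)$ is read off from $V+V^{T}$; the delicate point is to track the orientation of $M_K$, the choice of $\omega$, the sign convention for the linking form, and the meaning of the ``standard character'' consistently through the computation, so that one lands on $+\beta(x,x)$ with the correct sign. Granting this base case, the general statement follows from the comparison of homomorphisms above.
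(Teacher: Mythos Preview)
The present paper does not prove Theorem~\ref{cglink}; it is stated without argument and attributed to the authors' earlier papers~\cite{ln1, ln2}. So there is no in-paper proof to compare against directly.

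On its own merits, your bordism strategy is a valid reduction: $\sigma(K,\chi)\bmod\zz$ does descend to a homomorphism $\Omega_3^{SO}(B\zz_{p^r})\to\qq/\zz$; that bordism group is cyclic of order $p^r$; and the self-linking $\beta(x,x)$ (via the cup-product/Bockstein description of $H_3(B\zz_{p^r})$) furnishes another such homomorphism. So everything does come down to one lens-space check. The gap is that you do not carry out that check---you write ``Granting this base case, the general statement follows''---and the base case is precisely where the content lies. One must actually compute $\sigma(T(2,p^r),\chi_0)\bmod\zz$ (say via an explicit bounding $4$--manifold or an equivariant-signature formula) and compare it to the self-linking on $\pm L(p^r,1)$, matching all orientation and normalization conventions so as to obtain $+\beta(x,x)$ rather than some other unit multiple in $(\zz_{p^r})^{\times}$. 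Until that computation is written out, what you have is an outline rather than a proof. For what it is worth, the arguments in~\cite{ln1, ln2} proceed by building an explicit $4$--manifold from a Seifert surface for $K$ pushed into the $4$--ball, so that both the twisted signature defect and the linking form are expressed in terms of the Seifert matrix and can be compared directly; that route bypasses the bordism abstraction at the price of a concrete matrix calculation.
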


\noindent
This will be used later to conclude that certain Casson--Gordon invariants are nonzero.

\vskip.2in 
\noindent {\bf NOTATION:}\\
 In the rest of this paper all knots will satisfy 
$H_1(M_K)_3
\cong
\zz_3
\oplus \zz_{3^{2i}}$. All characters $\chi$ will   take values in $\zz_{3^{2i}} \subset
\qq/\zz$, and such $\chi$ factor through characters defined on $\zz_3
\oplus \zz_{3^{2i}}$. Any such character is given by linking with an element of the
$H_1(M_K)_3$, say $(x,y) \in \zz_3
\oplus \zz_{3^{2i}}$.  To simplify notation we will write $\sigma(K, \chi) $ as
$\sigma_{x,y}.$

\section{Proof of Theorem 1.2}

Throughout this section we will assume that  $4K$ is slice.  We will consider all
possible metabolizers to the linking form on $(\zz_3 \oplus  \zz_{3^{2i}})^4$ and show that
each leads to a contradiction to Theorem \ref{CG}.

\begin{lemma} There is a generating set $ \{ v , w\} $ for $\zz_3 \oplus  \zz_{3^{2i}}$
  such that $v$ is of order 3, $w$ is of order $3^{2i}$, and the linking form
satisfies: $\beta(v,v) =\pm 1/3$,  $\beta(w,w) =\pm 1/3^{2i}$, and $\beta(v,w) =0$.
\end{lemma}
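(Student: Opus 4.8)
The plan is to peel off the large cyclic summand first. Write $n = 2i$; since $\zz_3 \oplus \zz_{3^n}$ is genuinely noncyclic we have $n \ge 2$, and this inequality is the one place the hypothesis enters.

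\emph{Step 1: a full-order self-linking.} I would first produce an element $w$ of order $3^n$ with $\beta(w,w)$ of order $3^n$ in $\qq/\zz$. Pick any generators $e_1, e_2$ of the two summands, of orders $3$ and $3^n$, and set $\beta(e_1,e_1) = a/3$, $\beta(e_1,e_2) = c/3$ (this value has order dividing $3$ since $3e_1 = 0$), and $\beta(e_2,e_2) = d/3^n$. The element $3^{n-1}e_2$ has order $3$, so nonsingularity of $\beta$ forces $\beta(3^{n-1}e_2, -) \ne 0$; but $\beta(3^{n-1}e_2, e_1) = 3^{n-2}c = 0$ in $\qq/\zz$ because $n \ge 2$, so we must have $\beta(3^{n-1}e_2, e_2) = d/3 \ne 0$, i.e.\ $3 \nmid d$. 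Hence $w := e_2$ works.

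\emph{Step 2: orthogonal splitting.} Since $\beta$ restricted to $\langle w \rangle$ is given by the single value $d/3^n$ with $d$ a unit, it is nonsingular, and the usual argument applies: the homomorphism $x \mapsto \beta(x,-)|_{\langle w\rangle}$ has kernel $\langle w \rangle^\perp$, is surjective (it is already surjective when restricted to $\langle w \rangle$), and is injective on $\langle w \rangle$, whence $\zz_3 \oplus \zz_{3^n} = \langle w \rangle \oplus \langle w\rangle^\perp$ with $\beta$ nonsingular on each factor. So $\langle w \rangle^\perp \cong \zz_3$; take $v$ a generator, so that $\beta(v,v) = a'/3$ with $3 \nmid a'$ and $\beta(v,w) = 0$.

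\emph{Step 3: normalization.} Replacing $w$ by $\mu w$ for a unit $\mu \in (\zz/3^n\zz)^\times$ multiplies $d$ by $\mu^2$, so $\beta(w,w)$ can be brought to $\pm 1/3^n$ provided $d$ differs from a square of a unit by a sign. Now $(\zz/3^n\zz)^\times$ is cyclic of order $2 \cdot 3^{n-1}$, so $-1$, its unique element of order $2$, is not a square; hence $\{\pm 1\} \cdot (\text{squares})$ exhausts the unit group and the normalization succeeds. The same reasoning modulo $3$, where the units are just $\{\pm 1\}$, gives $\beta(v,v) = \pm 1/3$ with no rescaling needed. Since $\{v,w\}$ generates $\zz_3 \oplus \zz_{3^n}$, we are done. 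The substantive point is Step 1: a nonsingular linking form on $\zz_3 \oplus \zz_{3^{2i}}$ is not \emph{a priori} diagonal in the given coordinates, and one must use nonsingularity together with $2i \ge 2$ to locate an order-$3^{2i}$ element whose self-linking also has order $3^{2i}$. (Alternatively one could invoke the classification of nonsingular linking forms on finite abelian $p$-groups for odd $p$ as orthogonal sums of cyclic forms, but the direct argument above is shorter and self-contained.)
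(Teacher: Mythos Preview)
Your argument is correct and follows the same three-step plan as the paper's proof: (1) use nonsingularity together with $2i \ge 2$ to see that any order-$3^{2i}$ generator $b$ has $\beta(b,b)$ of exact order $3^{2i}$; (2) split off an orthogonal order-$3$ element (the paper does this by the explicit Gram--Schmidt formula $v = a - 3^{2i}\beta(a,b)\,u\,b$ rather than your abstract $\langle w\rangle \oplus \langle w\rangle^{\perp}$ splitting); (3) rescale using that every unit modulo $3^{2i}$ is $\pm$ a square. One tiny quibble: noncyclicity alone only forces $n \ge 1$; the conclusion $n \ge 2$ also uses that $n = 2i$ is even.
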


\begin{proof}   Let $a$ generate the $\zz_3$ summand and let $b$
generate the
$\zz_{3^{2i}}$ summand. Since there is a character to $\qq / \zz$ taking value
$1/3^{2i}$ on $b$, by the nonsingularity of the linking form there is an element $x$
satisfying
$\beta(x,b) = 1/3^{2i}$ .  Write  $x = ra + sb$.  Since $\beta(a,b) $ is a multiple
of $1/3$ ($a$ is of order 3), $s \beta(b,b)$ must of the form  $t/3^{2i}$ with $t$
not divisible by 3.  Hence there is an integer $u$ such that $u\beta(b,b) =
 1/3^{2i}$.  
Let $v =a -  3^{2i}\beta(a,b)   u b$. It is easily checked that  $v$ is of order 3
and $\beta(v,b) = 0$.  

By the nonsingularity of the linking form, $\beta(v,v)= \pm 1/3$.  As observed above, 
$\beta(b,b)
= t/3^{2i}$ for some  
$t \in \zz_{3^{2i}},\ t \, \not\equiv \, 0$ mod $3$.    Let $s$ be the inverse to $t$ in 
$\zz_{3^{2i}}$. Then $\pm s = q^2$ for some $q \in \zz_{3^{2i}}$.  
(The square of an element is $0$ mod $3$ if and only if the element itself
is such. 
In $\zz_{3^{2i}}$ there are a total of  $3^{2i-1}$ elements which are $0$ mod
$3$. It follows that there are $3^{2i}-3^{2i-1}$
elements which are $\pm 1$ mod 3, half of which 
are additive inverses of the other half, and there are  
$\frac{3^{2i}-3^{2i-1}}{2}$
 distinct squares which are not $0$ mod $3.$)
  Let $w = qb$.

\end{proof}

From now on we will fix the generating set to be as given in the previous lemma.

In order to apply Theorem \ref{CG} to the knot $4K$, we let $H = H_1(M_{4K})_3
\cong 
 (\zz_3 \oplus \zz_{3^{2i}})^4 \cong ({\zz_3})^4 \oplus (\zz_{3^{2i}})^4$. We will
let $M$ denote a metabolizer in $H$.  To set up notation, we will represent an element
in
$({\zz_3})^4
\oplus ({\zz_{3^{2i}}})^4$ by an ordered 8--tuple  and a collection of $n$
elements in 
$({\zz_3})^4 \oplus ({\zz_{3^{2i}}})^4$ by an $n \times 8$ matrix, the rows of which
represent the individual elements. Each element will be written as  $$u_i = v_i
\oplus w_i \in ( {\bf Z}_{3})^4 \oplus ({\bf Z}_{{3^{2i}}} ) ^4 , 1\le i \le 4.$$

\begin{lemma}\label{rkM} Let $M$ be a metabolizer for $H$.  
Then $M$ cannot be generated by less than four elements. 

\end{lemma}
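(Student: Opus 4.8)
The plan is to exploit the structural constraints on a metabolizer $M \subset H = (\zz_3 \oplus \zz_{3^{2i}})^4$ coming from Lemma \ref{H/M=M}, namely $|M|^2 = |H| = 3^4 \cdot 3^{8i}$, so $|M| = 3^2 \cdot 3^{4i} = 3^{4i+2}$. First I would decompose the linking form using the generating set $\{v,w\}$ from Lemma 3.1: writing $H = V \oplus W$ where $V = (\zz_3)^4$ is spanned by the four copies of $v$ and $W = (\zz_{3^{2i}})^4$ by the four copies of $w$, the form is block diagonal, with $\beta$ restricted to $V$ given (up to signs) by $\pm\frac13 I_4$ on $V$ and $\pm\frac{1}{3^{2i}} I_4$ on $W$. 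The key observation is that $V = 3^{2i-1} H$ is precisely the subgroup of elements of order dividing $3$ that are ``deepest'' — more usefully, multiplication by $3^{2i-1}$ maps $H$ onto $V$ with kernel containing all of $V$ itself only partially; I would instead track the map $\pi \colon H \to V$ given by reduction mod $3$ in each $\zz_{3^{2i}}$ coordinate together with the identity on $V$, and separately the ``bottom'' subgroup $3^{2i-1}W \subset W$.

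The main step is a rank count via reduction mod $3$. Let $\bar M$ be the image of $M$ in $H/3H \cong (\zz_3)^8$; then $\bar M$ is a subspace whose dimension equals the minimal number of generators of $M$ (by Nakayama / the structure theorem, the minimal generator count of the $\zz_{(3)}$-module $M$ is $\dim_{\ff_3} M/3M$, and $M/3M \cong \bar M$ since $M$ is a direct summand is \emph{not} automatic, so I must argue this carefully — actually $\dim \bar M \le$ minimal number of generators always, which is the direction I need). So it suffices to show $\dim_{\ff_3} \bar M \ge 4$. Suppose instead $\dim \bar M \le 3$. I would then bound $|M|$ from above: every element of $M$ reduces into $\bar M$, and the kernel of $M \to \bar M$ is $M \cap 3H$, which sits inside $3H \cong (\zz_{3^{2i-1}})^4$ (the $V$-part dies), so $|M \cap 3H| \le 3^{4(2i-1)} = 3^{8i-4}$. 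That gives $|M| \le 3^{3} \cdot 3^{8i-4} = 3^{8i-1}$, which is \emph{less} than $3^{4i+2}$ only when $8i - 1 < 4i+2$, i.e. $i < 3/4$ — so this crude bound fails for $i \ge 1$ and must be sharpened.

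The sharpening, which I expect to be the real obstacle, uses the metabolizer condition $M = M^\perp$ rather than just the order count, applied to the \emph{induced form on $3H$}. Concretely: $M \cap 3H$ is isotropic for $\beta$ restricted to $3H$, and more importantly its ``shadow'' in the bottom layer must be self-annihilating. The cleanest route is to iterate: consider the filtration $H \supset 3H \supset \cdots \supset 3^{2i-1}H$ and the subgroups $M_j = M \cap 3^j H$. Using $M = M^\perp$ one shows each $M_j$ pairs perfectly with $H/(3^{2i-j}H + \text{something})$, forcing $|M_j|$ to be exactly $3^{(\text{codim})}$; tracking the $V$-summand separately (where the form is $\pm\frac13 I_4$, so a metabolizer there would need rank $2$ inside a rank-$4$ space, but the $V$-part of $M$ is entangled with the top of $W$) shows that the projection of $M$ to $V$ must itself have $\ff_3$-dimension at least $2$, while the projection to $W/3W$ independently needs dimension at least $2$, and these live in complementary coordinates — the first in the ``$v$-slots,'' the second forced by nonsingularity of $\beta$ on $W \bmod 3$. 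Adding these gives $\dim \bar M \ge 4$. I would organize this as: (1) reduce to showing $\dim_{\ff_3}\bar M \ge 4$; (2) show the composite $M \hookrightarrow H \to V$ has image of dimension $\ge 2$ using that $V^\perp \cap H$ equals $W$ and a metabolizer cannot be contained in $W$ (else $|M| \le 3^{4i} < 3^{4i+2}$); (3) symmetrically show $M \to H \to W/3W$ has rank $\ge 2$; (4) conclude. The delicate point throughout is that minimal generator count could a priori exceed $\dim \bar M$ is false — it equals it — but proving the two rank-$2$ lower bounds genuinely requires the self-duality $M = M^\perp$ and not merely $|M|^2 = |H|$, since the order bound alone is symmetric under swapping isotropic for coisotropic and does not see the ``spread across summands'' phenomenon.
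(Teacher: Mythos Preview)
Your reduction in step~(1) to showing $\dim_{\ff_3}\bar M \ge 4$ is fatal: this inequality is simply false for some metabolizers. Take $i=1$, so $H=V\oplus W$ with $V=(\zz_3)^4$, $W=(\zz_9)^4$, and the form from Lemma~3.1. The subgroup $3W\subset W$ is totally isotropic with $(3W)^\perp=V\oplus 3W$, and if $L\subset V$ is any $2$--dimensional Lagrangian for the form $\tfrac{\pm1}{3}I_4$ on $V$ (for instance $L=\langle v_1+v_2+v_3,\;v_1-v_2+v_4\rangle$), then $M=L\oplus 3W\cong(\zz_3)^6$ satisfies $M=M^\perp$. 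Here the minimal number of generators is $6$, so the lemma's conclusion holds, but $\bar M=L$ has dimension~$2$. This same example kills your step~(3): $M$ projects to zero in $W/3W$. And even had (2) and~(3) been true, your step~(4) is a non sequitur: a subspace of $A\oplus B$ projecting onto rank~$\ge2$ in each factor can itself have dimension~$2$.

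The paper's proof is three lines and uses nothing about the form beyond the consequence $H/M\cong M$ of Lemma~\ref{H/M=M}. If $M$ is minimally generated by $k$ elements then $M\otimes\zz_3\cong(\zz_3)^k$; since $H/M\cong M$, also $(H/M)\otimes\zz_3\cong(\zz_3)^k$. Right exactness of $-\otimes\zz_3$ applied to $0\to M\to H\to H/M\to 0$ gives $(H/M)\otimes\zz_3\cong (H\otimes\zz_3)/\bar M$, which has dimension $\ge 8-k$. Hence $k\ge 8-k$, so $k\ge4$. Your closing claim that ``proving the two rank-$2$ lower bounds genuinely requires the self-duality $M=M^\perp$ and not merely $|M|^2=|H|$'' points in exactly the wrong direction: the isomorphism $H/M\cong M$ is all that is needed, and the finer isotropy structure plays no role here.
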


\begin{proof} Tensor $H$ and $M$ with $\zz_3$. We have $H \otimes \zz_3 \cong
({\zz_3})^8.$ 
If $M$ is generated by $k$ elements, then $M  \otimes \zz_3 \cong
({\zz_3})^k.$ If $k \le 3$, then ${\rm rk}((H \otimes \zz_3 ) / (M \otimes \zz_3)) \ge 5.$
As
rk$((H/M) \otimes \zz_3)\, \ge\, {\rm rk}((H \otimes \zz_3 ) / (M \otimes \zz_3)),$ we have
a contradiction to the fact that $H/M\,  \cong \, M.$ 
\end{proof}

We will call the minimum number of elements required to generate $M$, the rank of $M$.
The proof of Theorem 1.2 is simplest in the case that the rank is greater than 4.

\begin{theorem} \label{rkge4}If {\rm rank}$(M) = k$, $k > 4$, then $K$ is not of order
4 in concordance.
\end{theorem}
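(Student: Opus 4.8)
To prove Theorem~\ref{rkge4}, the plan is to assume for contradiction that $4K$ is slice, fix a metabolizer $M$ as provided by Theorem~\ref{CG}, and then exhibit a single order--$3$ character on $H_1(M_{4K})$ that vanishes on $M$ yet has nonzero Casson--Gordon invariant. First I would reduce to the $3$--primary part: the linking form on $H_1(M_{4K})$ is an orthogonal direct sum over its primary summands, so the $3$--primary part of a metabolizer is a metabolizer for the linking form restricted to $H = H_1(M_{4K})_3\cong(\zz_3\oplus\zz_{3^{2i}})^4$, and a $\zz_{3^{2i}}$--valued character of $H_1(M_{4K})$ vanishes on the metabolizer if and only if it vanishes on this $3$--primary piece. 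So from now on $M\subset H$ is a metabolizer with $\mathrm{rank}(M)=k>4$, and for $1\le j\le 4$ I let $e_j\in H$ be the element whose $j$--th coordinate is the fixed order--$3$ generator $v$ and whose other coordinates vanish.

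The heart of the argument is a dimension count in the $\zz_3$--vector space $H[3]=\{x\in H : 3x=0\}$, which has dimension $8$. The subspace $M[3]$ has dimension equal to the minimal number of generators of $M$, namely $k\ge 5$, while $V'=\langle e_1,e_2,e_3,e_4\rangle$ has dimension $4$. Since $5+4>8$, the intersection $M[3]\cap V'$ has positive dimension, so I can choose a nonzero element $z_0=\sum_{j=1}^4 a_j e_j\in M$ with $(a_1,\dots,a_4)\ne 0$ in $(\zz_3)^4$.

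Let $\chi_0$ be the character of $H_1(M_{4K})$ obtained by linking with $z_0$; it has order $3$, and since $z_0\in M=M^\perp$ it vanishes on $M$, so $\sigma(4K,\chi_0)=0$ by Theorem~\ref{CG}. On the other hand, under $H_1(M_{4K})\cong H_1(M_K)^{\oplus 4}$ the character $\chi_0$ is the direct sum of the four factor characters corresponding to $(a_j,0)\in\zz_3\oplus\zz_{3^{2i}}$, so Gilmer's additivity theorem (Theorem~\ref{Gi}) gives $\sigma(4K,\chi_0)=\sum_{j=1}^4\sigma_{a_j,0}$. The term $\sigma_{0,0}$ vanishes; and when $a_j\ne 0$ we have $a_j=\pm 1$ in $\zz_3$, so the corresponding character is $\pm$ the character linking with $v$, whence $\sigma_{a_j,0}=\sigma_{1,0}$ by the identity $\sigma(K,\chi)=\sigma(K,-\chi)$. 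Finally Theorem~\ref{cglink} together with the fact that $\beta(v,v)=\pm 1/3$ gives $\sigma_{1,0}\equiv\pm 1/3\pmod{\zz}$, so $\sigma_{1,0}\ne 0$. Hence $\sigma(4K,\chi_0)=m\,\sigma_{1,0}\ne 0$, where $m=\#\{\,j : a_j\ne 0\,\}\ge 1$, contradicting the previous sentence.

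I expect the only parts needing genuine care to be the reduction to the $3$--primary summand (so that the linking form and the characters split as direct sums over the four copies of $K$) and the non-vanishing $\sigma_{1,0}\ne 0$, which is forced by reducing the rational invariant $\sigma$ modulo $\zz$ to the self-linking $\beta(v,v)=\pm 1/3$. Everything else reduces to the counting inequality $5+4>8$; the reason the large-rank case is the simplest is precisely that this slack disappears when $\mathrm{rank}(M)=4$, where one must instead analyze the possible structures of $M$ directly.
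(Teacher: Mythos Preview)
Your argument is correct and follows the same core idea as the paper: exhibit a nonzero element of $M$ lying in the ``pure $\zz_3$'' summand $V'=(\zz_3)^4\times\{0\}\subset H$, then use additivity and $\sigma_{1,0}\not\equiv 0\pmod\zz$ to contradict Theorem~\ref{CG}. The paper reaches this element by row--reducing a $k\times 8$ generating matrix $(V\,|\,W)$ so that the fifth row of $W$ becomes zero, whereas you obtain it by the cleaner dimension count $\dim M[3]+\dim V'\ge 5+4>8=\dim H[3]$ inside the $\zz_3$--vector space $H[3]$. A small bonus of your version is that you never need the self--linking computation the paper uses to conclude that exactly three of the $a_j$ are nonzero: since $\sigma_{1,0}\in\qq\setminus\{0\}$, any positive multiple $m\,\sigma_{1,0}$ with $1\le m\le 4$ is already nonzero, so the contradiction goes through without pinning down $m$.
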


\begin{proof}
Consider a minimal generating set $\{(v_i,w_i)\}_{i = 1 \ldots k}$. These form the 
rows of a
$k
\times  8$ matrix which we denote $( V | W)$, where $V$ and $W$ are each $k \times 4$.
We will now perform row operations to simplify the generating set.  It will be
convenient to interchange columns in these matrices as well,  but notice that if  
two columns of $W$ are interchanged, the same columns of $V$ will be interchanged,
since these columns correspond  to  the homology of the cover of a given component of
$4K$.

By performing row operations and column interchanges, $W$ can be put in upper
triangular form.  Hence, the fifth row of $W$ is the trivial vector, $(0,0,0,0) \in
(\zz_{3^{2i}})^4$. After
further column swaps, the fifth row of $V$ can be put in the form $(\pm 1, \pm 1, \pm
1, 0)$, as  these are the only nontrivial elements in $({\zz_3})^4$ with trivial
self--linking.  

It follows that $3 \sigma_{1,0} = 0$, and hence $\sigma_{1,0} = 0$. However, by
Theorem \ref{cglink},   $\sigma_{1,0} \equiv 1/ 3 \ {\rm mod\ } \zz$, giving a contradiction. 
\end{proof}

The rest of this section is devoted to the case that rank$(M) = 4$.

\begin{lemma} \label{basicform} Let {\rm rank}$(M) = 4$. Then $M$ has 
a generating set $\{\, u_j=v_j \oplus w_j \in (\zz_3)^4 \oplus (\zz_{3^{2i}})^4
\, \vert \, j=1,2,3,4\, \}$  
such that the corresponding matrix $(V|W)$ is of the form given below.  The
$v_{i,j}$ are elements in $\zz_3$ and  the $w_{i,j}$
are elements in $\zz_3^{2i}$.
$$
 \left( \begin {array}{rrrr} v_{1,1}  &v_{1,2 } &v_{1,3} & v_{1,4} \\\noalign{\medskip}
 v_{2,1}  &v_{2,2 } &v_{2,3} & v_{2,4} \\\noalign{\medskip}
 v_{3,1}  &v_{3,2 } &v_{3,3} & v_{3,4} \\\noalign{\medskip}
 v_{4,1}  &v_{4,2 } &v_{4,3} & v_{4,4}\end {array}\right |
 \left. \begin {array}{rrrr} 1&0 & w_{1,3 }& w_{1,4} \\\noalign{\medskip}
 0&1&w_{2,3}  &w_{2,4}  \\\noalign{\medskip}
0&0&3^{2i-1} &0  \\\noalign{\medskip}
 0&0&0&3^{2i-1} \end {array}\right )$$
\end{lemma}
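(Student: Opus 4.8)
The plan is to work throughout inside $H = (\zz_3)^4 \oplus (\zz_{3^{2i}})^4$, writing $S$ for the ``small'' summand $(\zz_3)^4$ and $P$ for the ``large'' summand $(\zz_{3^{2i}})^4$. With the generators fixed as after the previous lemma, the linking form is an orthogonal sum $\beta = \beta_S \oplus \beta_P$, where $\beta_S$ is diagonal of type $\langle \pm 1/3\rangle$ on $S$ and $\beta_P$ is diagonal of type $\langle \pm 1/3^{2i}\rangle$ on $P$, the signs being the same in all four components since the four summands are copies of the linking form of $K$. The moves permitted on a generating set of $M$ are the integer row operations on the $4 \times 8$ matrix $(V \mid W)$ --- since $\mathrm{rank}(M) = 4$ these automatically preserve minimality --- together with permutations of the four components, i.e.\ simultaneous permutations of the columns of $W$ and of the corresponding columns of $V$, exactly as in the proof of Theorem~\ref{rkge4}.

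The first step is to determine the isomorphism type of $M$, the goal being $M \cong (\zz_{3^{2i}})^2 \oplus (\zz_3)^2$. By Lemma~\ref{H/M=M} one has $|M| = 3^{4i+2}$ and $M \cong H/M$; together with $\mathrm{rank}(M) = 4$ and the embedding $M \hookrightarrow H$ (so $\exp M \le 3^{2i}$ and $M$ has at most four invariant factors that are $\ge 3^2$) this severely restricts the invariant factors of $M$, but actually forcing them to be $3^{2i}, 3^{2i}, 3, 3$ seems to require the linking form: an element of $M$ realising an ``intermediate'' invariant factor (e.g.\ $3^{2i-1}$ or $3^2$) would, via $M = M^\perp$ and the splitting $\beta = \beta_S \oplus \beta_P$, be trapped in a subgroup on which $\beta$ is too degenerate to extend to a metabolizer of order $3^{4i+2}$. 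I expect this structure step to be the most delicate part of the argument.

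Granting $M \cong (\zz_{3^{2i}})^2 \oplus (\zz_3)^2$, the matrix normalisation is bookkeeping. Pick a minimal generating set $u_1, u_2$ of order $3^{2i}$ and $u_3, u_4$ of order $3$. Since $3u_3 = 3u_4 = 0$, the $P$-parts $w_3, w_4$ lie in $P[3] = (3^{2i-1}\zz_{3^{2i}})^4$, so rows $3$ and $4$ of $W$ are divisible by $3^{2i-1}$, hence vanish mod $3$. On the other hand $w_1, w_2$ are not divisible by $3$ (else $u_1$ or $u_2$ would have order below $3^{2i}$), and $\bar w_1, \bar w_2 \in P/3P$ must be linearly independent: otherwise a single row operation would replace $u_2$ by an element of order $< 3^{2i}$, leaving a generating set with only one element of order $3^{2i}$ --- impossible, because the elements $3^{2i-1}u$, as $u$ ranges over any generating set of $M$, must span $3^{2i-1}M \cong (\zz_3)^2$. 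After permuting components so that columns $1$ and $2$ witness the independence of $\bar w_1, \bar w_2$, the top-left $2\times 2$ block of $W$ becomes invertible over $\zz_{3^{2i}}$; left-multiplying rows $1,2$ by its inverse turns it into $I$, and then subtracting suitable multiples of rows $1,2$ from rows $3,4$ clears their first two entries --- the coefficients used being multiples of $3^{2i-1}$, so rows $3,4$ stay divisible by $3^{2i-1}$. A final $\mathrm{GL}_2(\zz)$ row operation on rows $3,4$, together with a swap of columns $3$ and $4$, reduces the remaining $2\times 2$ block to $3^{2i-1}I$, yielding the matrix displayed in the statement.

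The main obstacle is this last reduction: it needs the $2\times 2$ matrix formed by rows $3,4$ in columns $3,4$ --- equivalently, writing $w_j = 3^{2i-1}z_j$ for $j = 3,4$, the $2\times 2$ matrix with rows $z_3, z_4$ --- to be invertible mod $3$. This is equivalent to $\pi_P\colon M \to P$ being injective, that is, to $M \cap S = 0$. A nonzero $x \in M \cap S$ lies in $S$ and in $M = M^\perp$, whence $\beta_S(x,x) = 0$ and $\beta_S\!\left(x, \pi_S(M)\right) = 0$, so $\pi_S(M)$ is a proper subgroup of $S$; pushing the resulting degeneracy of $\beta_S$ against $|M| = 3^{4i+2}$, $\mathrm{rank}(M) = 4$ and the fact that $\beta_S$ is four identical copies of a single anisotropic rank-one $\zz_3$-form should force $x = 0$. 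This step, of the same flavour as the structure step, is where I would expect the real content of the proof to lie.
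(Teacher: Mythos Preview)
Your plan has a genuine gap at exactly the point you flag as ``where the real content lies'': the step $M\cap S=0$ \emph{cannot} be obtained from the linking form and the constraints $|M|=3^{4i+2}$, $\mathrm{rank}(M)=4$, $M=M^{\perp}$ alone.  For $i=1$ (so $H=(\zz_3)^4\oplus(\zz_9)^4$ with $\beta_S=\langle 1/3\rangle^{4}$ and $\beta_P=\langle 1/9\rangle^{4}$) the subgroup
\[
M \;=\; \bigl\langle (1,1,1,0),\,(1,-1,0,1)\bigr\rangle \;\oplus\; \bigl\langle (1,1,4,0),\,(0,5,1,1)\bigr\rangle \;\subset\; S\oplus P
\]
is a metabolizer: each displayed generator is isotropic, the two pairs are mutually orthogonal, and $|M|=9\cdot 81=3^{6}=\sqrt{|H|}$.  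It has rank~$4$ and isomorphism type $(\zz_9)^2\oplus(\zz_3)^2$, yet $M\cap S$ is the whole first factor, so $M\cap S\neq 0$.  Thus your proposed ``degeneracy of $\beta_S$'' argument cannot succeed; the form $\langle 1/3\rangle^{4}$, while built from anisotropic rank-one pieces, is itself hyperbolic over $\zz_3$.

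What the paper actually uses here is not more linking-form algebra but the standing hypothesis (announced at the start of the section) that $4K$ is slice, together with Theorem~\ref{cglink}.  In the triangularization of $W$, the case $k_4=2i$ means the last row is $(v_4\,|\,0)$ with $v_4\neq 0$; self-linking zero forces $v_4$ to have exactly three nonzero entries, so the associated character yields $3\sigma_{1,0}=0$, contradicting $\sigma_{1,0}\equiv\pm 1/3\pmod{\zz}$.  Only after this Casson--Gordon input rules out $M\cap S\neq 0$ does the paper use the rank condition on $H/M\cong M$ to force $k_3=k_4=2i-1$ and $k_1=k_2=0$.  Your matrix bookkeeping in the middle paragraph is fine, but the structure step and the injectivity of $\pi_P|_M$ both need this Casson--Gordon ingredient, not a sharper linking-form argument.
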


\begin{proof} 
 Row operations and column swaps (provided the same column swaps are made in $V$ as in $W$) 
 can be used to make $W$ upper triangular with the
diagonal entries nondecreasing powers of 3 such that the remaining entries
in the $j$th row are annihilated by the same power of 3 as is the diagonal entry. 
Let the diagonal entries be $3^{k_j}$ with $0 \le k_1 \le k_2 \le k_3 \le k_4 \le 2i$. 
It is easily seen that the order of the element $u_j$ 
represented by row $j$ of this matrix is $3^{2i-k_j}$ 
and together the $u_j$ generate a subgroup of order $3^{(8i - \sum k_j)}$.  On the other
hand, the order of $H$ is $3^{8i+4}$ and $M$ has the square root order $3^{4i+2}$.  It follows that 
$\sum k_j= 4i-2$.

We first note that $k_4 \ne
2i$: 
If $k_4 = 2i$ then the last   row has the form
$(v_{4,1},v_{4,2} ,v_{4,3}, v_{4,4}\ |\ 0,0,0,0)$ with some of the $v_{4,j}$ nonzero. 
Since the self--linking of this element is 0, exactly 3 of the entries would be
nonzero and it would follow that $3\sigma_{1,0} = 0$, implying that  $ \sigma_{1,0} =
0$,  contradicting Theorem \ref{cglink}.

Hence, we have $0 \le k_1 \le k_2 \le k_3 \le k_4 \le 2i-1$.  

If $k_4<2i-1$, then the generator $u_4$ generates a cyclic
subgroup of order greater
than 3. As $\sum k_j= 4i-2$, $k_4$ cannot be zero. 
It follows that 
$H/\langle\, u_4\, \rangle$ has rank 8. This implies that  
$H/\langle\, u_1,u_2,u_3,u_4\, \rangle$ has rank 5 or more. However, 
by Lemma \ref{H/M=M},  the rank of $H/M$ is $4$.
Therefore, we have
$k_4=2i-1,\  0 \le k_1 \le k_2\le k_3 \le 2i-1,$  and $k_1+k_2+k_3=2i-1.$ 
As $k_3$ cannot be 0 either, a similar argument 
shows  that $k_3$ will have to 
be  $ 2i-1.$ Therefore we have   $k_1 = k_2 =0.$ 
It is easy to see that the entries above the $1$ in the second row
and the $3^{2i}$ in the last row can be made $0$.
\end{proof}

Our argument continues to  proceed by ruling out possible metabolizers under the
assumption that $4K$ is slice.

\begin{lemma}  \label{obs0}
Each of the entries $w_{i,j}$ in $(V |W)$ in the form given by Lemma
\ref{basicform} may be assumed to be $\pm 1$ mod $3$. 
The $\zz_3$ reductions
of the elements
$(0,0, w_{1,3 }, w_{1,4} )$ and 
$(0,0, w_{2,3 }, w_{2,4} )$ are linearly independent in $(\zz_3)^4$.
\end{lemma}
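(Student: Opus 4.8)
The plan is to derive both assertions directly from the single fact that $M$ is a metabolizer, so that $\beta(u_i,u_j)=0$ for all $i,j$; only the three pairs $(u_1,u_1)$, $(u_2,u_2)$, $(u_1,u_2)$ are needed, and for this particular lemma no Casson--Gordon input enters. First I would record the explicit form of the linking form on $H$. Since $4K=K\#K\#K\#K$, the cover $M_{4K}$ is a connected sum of four copies of $M_K$ and $\beta$ is the orthogonal sum of four copies of the form on $M_K$; expressing everything in the generating set fixed after Lemma 3.1 --- the \emph{same} one in each copy --- one gets, for $u=v\oplus w$ and $u'=v'\oplus w'$ in $(\zz_3)^4\oplus(\zz_{3^{2i}})^4$,
$$\beta(u,u')=\frac{\epsilon}{3}\sum_{k=1}^4 v_kv'_k+\frac{\delta}{3^{2i}}\sum_{k=1}^4 w_kw'_k ,$$
with $\epsilon,\delta\in\{\pm1\}$ the (common) signs from Lemma 3.1; in particular $\epsilon,\delta$ are units mod $3$. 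We may assume $i\ge1$: for $i=0$ the group $H_1(M_K)_3$ is cyclic and Theorem \ref{oldthm} already shows $K$ has infinite order, contrary to the standing assumption that $4K$ is slice.

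Next I would apply $\beta(u_1,u_1)=0$. Reading the first row of $W$ in Lemma \ref{basicform} as $(1,0,w_{1,3},w_{1,4})$ and clearing the denominator $3^{2i}$, this becomes
$$3^{2i-1}\epsilon\sum_{k}v_{1,k}^2+\delta\bigl(1+w_{1,3}^2+w_{1,4}^2\bigr)\equiv 0 \pmod{3^{2i}} .$$
Reducing modulo $3$ kills the first term because $2i-1\ge 1$, leaving $1+w_{1,3}^2+w_{1,4}^2\equiv 0\pmod 3$; since a square in $\zz_3$ is $0$ or $1$, both $w_{1,3}$ and $w_{1,4}$ must be units mod $3$, i.e. $\pm1\pmod 3$. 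The identical computation with $\beta(u_2,u_2)=0$ and the second row $(0,1,w_{2,3},w_{2,4})$ gives $w_{2,3},w_{2,4}\equiv\pm1\pmod3$. This is the first assertion.

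For the second assertion I would use $\beta(u_1,u_2)=0$. Because $w_{1,1}w_{2,1}+w_{1,2}w_{2,2}=1\cdot0+0\cdot1=0$, clearing the denominator and reducing mod $3$ yields $w_{1,3}w_{2,3}+w_{1,4}w_{2,4}\equiv0\pmod3$. Set $a=w_{1,3}$, $b=w_{1,4}$, $c=w_{2,3}$, $d=w_{2,4}$, all $\equiv\pm1\pmod3$ by the previous step; linear independence of the $\zz_3$ reductions of $(0,0,a,b)$ and $(0,0,c,d)$ in $(\zz_3)^4$ is exactly the statement $ad-bc\not\equiv0\pmod3$. If instead $ad\equiv bc\pmod3$, then (using $a^{-1}\equiv a$) $d\equiv abc$, hence $bd\equiv ab^2c\equiv ac$, and the relation $ac+bd\equiv0$ collapses to $2ac\equiv0\pmod3$, forcing $ac\equiv0$ --- impossible since $a,c$ are units. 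Hence $ad\not\equiv bc\pmod3$, as required.

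I do not expect a real obstacle here: the content is a short computation with the metabolizer identities. The only points needing care are getting the orthogonal-sum linking form right (in particular that all four copies carry the same pair of signs), tracking which entries of $W$ are the genuine $0$'s and $1$'s produced by Lemma \ref{basicform}, and making sure the passage from a congruence mod $3^{2i}$ to one mod $3$ is legitimate --- which is exactly where $i\ge 1$ is used. The small manipulation mod $3$ in the last paragraph is the only combinatorial step.
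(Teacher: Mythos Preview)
Your argument is correct. For the first assertion you do exactly what the paper does: use $\beta(u_j,u_j)=0$, clear the denominator $3^{2i}$, and reduce mod $3$ to force $1+w_{j,3}^2+w_{j,4}^2\equiv 0\pmod 3$, hence each $w_{j,k}$ is a unit. For the second assertion your route differs slightly: you use the cross relation $\beta(u_1,u_2)=0$ to get $w_{1,3}w_{2,3}+w_{1,4}w_{2,4}\equiv 0\pmod 3$ and then a short computation with units in $\zz_3$, whereas the paper instead observes that if the two $\zz_3$--reductions were dependent, a suitable combination of $u_1$ and $u_2$ would have $W$--part $(\pm1,\pm1,3a,3b)$, whose self--linking reduces to $\pm 2/3^{2i}$ modulo terms with denominator $3$ and hence cannot vanish. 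Both arguments are equally short and rest on the same metabolizer identities; yours uses one off--diagonal relation, the paper's sticks to self--linkings and avoids the little $\zz_3$--algebra at the end.
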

\begin{proof} The self--linking of the first row is  computed to be ${\alpha \over 3}
\pm  {(1+w_{1,3}^2+w_{1,4}^2) \over 3^{2i}} $ 
where $\alpha$ is
determined by the self-linking of the $v_{1,j}$. If either 
$w_{1,3}$ or $w_{1,4}$   
 were $0$ mod $3$ then it is easily shown that this sum could not
be an integer; basically, 0 is not  the sum of two nontrivial squares modulo 3.  It
follows that neither $w_{1,3}$ nor $w_{1,4}$ can be 0.  A similar argument applies
for $w_{2,3}$ and $w_{2,4}$ .

If the   elements $(0,0,w_{1,3 }, w_{1,4} )$ and 
$(0,0, w_{2,3 }, w_{2,4} )$ were dependent over $\zz_3$, then by combining the
first two rows of $(V|W)$ we would have 
$(*, *,*,*\ \vert \ \pm 1,\pm 1, 3a , 3b)$.  But such an element cannot have self--linking 0.
\end{proof}

\begin{lemma} The metabolizer $M$ contains an element of the type $(1,1,*,*\ |\  0,0,3^{2i-1}m,3^{2i-1}n),$
where $m$ and $n$ are integers. 
\end{lemma}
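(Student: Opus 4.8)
The plan is to show that the metabolizer $M$, with generating matrix $(V|W)$ in the form guaranteed by Lemma~\ref{basicform}, must contain an element whose $V$-part is $(1,1,*,*)$ and whose $W$-part has first two coordinates $0$. Since rows $3$ and $4$ already have $V$-part supported only in the last two columns and $W$-part equal to $(0,0,3^{2i-1},0)$ and $(0,0,0,3^{2i-1})$, the obvious move is to add suitable multiples of rows $3$ and $4$ to row $1$; this changes only the third and fourth coordinates of both the $V$-part and the $W$-part, and it changes the $W$-part in those slots by $(3^{2i-1}m, 3^{2i-1}n)$. So after such an operation the first row becomes $(v_{1,1}, v_{1,2}, *, *\ |\ 1,0,3^{2i-1}m_1, 3^{2i-1}n_1)$, and similarly for the second row we get $(v_{2,1}, v_{2,2}, *, *\ |\ 0,1,3^{2i-1}m_2, 3^{2i-1}n_2)$.

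The key point is then to add the (modified) first and second rows together to produce an element with $W$-part beginning $(1,1,\dots)$; here I would instead first arrange, using Lemma~\ref{obs0}, that the $V$-parts cooperate. By Lemma~\ref{obs0} each $w_{i,j}$ is $\pm 1$ mod $3$, so multiplying rows $1$ and $2$ by appropriate units in $\zz_{3^{2i}}$ (which does not disturb the $3^{2i-1}$-divisible entries in rows $3,4$ and only rescales the $1$'s in the $W$-part to other units) we can assume both $w_{1,3}, w_{1,4}$ and the analogous entries are normalized; more importantly we want the first coordinate of the $V$-part to be made into $\pm 1$. Actually the cleaner route: row $1$ plus row $2$ gives an element of the form $(v_{1,1}, v_{2,2}, *, *\ |\ 1,1,*,*)$ (the off-diagonal $V$ entries are zero after the reduction in Lemma~\ref{basicform}, so the first two $V$-coordinates of the sum are $v_{1,1}$ and $v_{2,2}$), and then the self-linking-zero constraint forces these to be nonzero, hence $\pm 1$ mod $3$; after rescaling by $\pm 1$ we may take the $V$-part to literally be $(1,1,*,*)$. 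Finally, subtracting the correct multiples of rows $3$ and $4$ clears the third and fourth $W$-coordinates down to multiples of $3^{2i-1}$, yielding exactly $(1,1,*,*\ |\ 0,0,3^{2i-1}m, 3^{2i-1}n)$.

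The main obstacle I anticipate is bookkeeping the interaction between rescaling a row by a unit of $\zz_{3^{2i}}$ (needed to turn various $w_{i,j}$ or $v_{i,j}$ into $\pm 1$) and the requirement that rows $3$ and $4$ keep the precise shape $(0,0,3^{2i-1},0)$ and $(0,0,0,3^{2i-1})$ so that the final clearing step lands in $3^{2i-1}\zz$. One must check that every operation used is an invertible change of generating set for $M$ (row operations by units, adding multiples of one row to another, and the already-sanctioned synchronized column swaps of $V$ and $W$), and that the self-linking-zero conditions—applied as in the proofs of Lemmas~\ref{basicform} and~\ref{obs0}, via "$0$ is not a sum of two nonzero squares mod $3$"—really do force the $V$-coordinates in question to be units rather than $0$. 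Once those two verifications are in hand, the element of the claimed type is exhibited explicitly as an integer combination of the four generators.
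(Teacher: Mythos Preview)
Your approach has a structural gap that cannot be repaired by bookkeeping. Write a general element of $M$ as $a u_1 + b u_2 + c u_3 + d u_4$. From the shape of $W$ in Lemma~\ref{basicform}, the first two $W$--coordinates of this element are exactly $(a,b)\in(\zz_{3^{2i}})^2$. Thus any element of $M$ whose $W$--part begins $(0,0,\ldots)$ must have $a\equiv b\equiv 0$ mod $3^{2i}$; in other words, it is already an integer combination of $u_3$ and $u_4$ alone. Your construction, which is essentially $u_1+u_2$ plus multiples of $u_3,u_4$, produces an element whose $W$--part begins $(1,1,\ldots)$, not $(0,0,\ldots)$, and no amount of adding rows $3$ and $4$ (which only touch the last two $W$--coordinates, and only by multiples of $3^{2i-1}$) will fix this. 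Two smaller errors compound the problem: Lemma~\ref{basicform} does \emph{not} make the $V$--block diagonal, so the $V$--part of $u_1+u_2$ is $(v_{1,1}+v_{2,1},\,v_{1,2}+v_{2,2},*,*)$, not $(v_{1,1},v_{2,2},*,*)$; and adding multiples of rows $3,4$ cannot ``clear the third and fourth $W$--coordinates down to multiples of $3^{2i-1}$'' since it only shifts those coordinates within a fixed residue class mod $3^{2i-1}$.

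The paper instead works entirely with $u_3$ and $u_4$. The substantive step you are missing is to prove that $(v_{3,1},v_{3,2})$ and $(v_{4,1},v_{4,2})$ are linearly independent in $(\zz_3)^2$; once that is known, a suitable $\zz_3$--combination $c u_3 + d u_4$ has $V$--part $(1,1,*,*)$ and $W$--part automatically of the form $(0,0,3^{2i-1}m,3^{2i-1}n)$. The independence is established by contradiction: if dependent, a nontrivial combination of $u_3,u_4$ gives an element $(0,0,*,*\ |\ 0,0,3^{2i-1}m,3^{2i-1}n)$ with $(m,n)\not\equiv(0,0)$ mod $3$; self--linking zero forces the $*$'s to vanish; then one uses the $\zz_3$--independence of $(w_{1,3},w_{1,4})$ and $(w_{2,3},w_{2,4})$ from Lemma~\ref{obs0} to build combinations of $u_1,u_2$ whose linking with this element forces $m\equiv n\equiv 0$ mod $3$, a contradiction. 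That interplay between rows $1,2$ (via Lemma~\ref{obs0}) and rows $3,4$ is the missing idea.
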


\begin{proof} 
Let $v_{i,j},\ w_{i,j}$  
and $u_i=v_i \oplus w_i$ be as in Lemma \ref{basicform} 
 
Suppose that $(v_{3,1}, v_{3,2})$ and
$(v_{4,1}, v_{4,2})$ are linearly dependent in $(\zz_3)^2$.  Then a nontrivial
combination of $u_3$ and $u_4$  
would yield an element $(0,0,*,*\ |\ 0,0,3^{2i-1}m, 3^{2i-1}n) \in M$.  Note that non-triviality in this case is over $\zz _3$. 
In other words, either $m$ or $n$ is nonzero mod 3. To have self--linking zero the $*$ entries would have to be 0, so that we 
have $u=(0,0,0,0\ |\ 0,0,3^{2i-1}m, 3^{2i-1}n) \in
M$.  

Now, from Lemma \ref{obs0} $(w_{1,3},w_{1,4})$ and $(w_{2,3},w_{2,4})$ are
linearly independent over $\zz _3$, so a linear combination of these yields a 
vector whose $\zz _3$ reduction is $(1,0).$ As the corresponding 
 linear combination of $u_1, u_2$ 
is an element in $M$ and therefore links the above $u$  
 trivially, we have
 $m \equiv 0$ mod $3$. Similarly $n \equiv 0$ mod $3$, giving us a contradiction.

It follows that $(v_{3,1}, v_{3,2})$ and
$(v_{4,1}, v_{4,2})$ are independent over $\zz _3$. Now,  by taking an appropriate combination of 
$u_3$ and $u_4$ 
we can find the desired element of $M$.
\end{proof}

\begin{lemma}   For $a,b \in \{0,\pm 1\},$  $M$ contains elements of the form
$(1,1,*,*\ |\ 3^{2i-1}a, 3^{2i-1}b,3^{2i-1}m,3^{2i-1}n),$
where $m,n \in \zz$ and exactly one of
the $*$ entries is nonzero.
\end{lemma}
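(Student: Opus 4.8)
The plan is to take the element provided by the preceding lemma, use the vanishing of an associated Casson--Gordon invariant to pin down its two ``$*$'' coordinates, and then correct the first two $\zz_{3^{2i}}$--coordinates without disturbing the $\zz_3$--part.

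So first I would fix an element $z=(1,1,s,t\mid 0,0,3^{2i-1}m_0,3^{2i-1}n_0)\in M$, with $s,t\in\zz_3$ its ``$*$'' entries and $m_0,n_0\in\zz$, as supplied by the preceding lemma. The heart of the matter is the self--linking $\beta(z,z)$. Since $4K=K\#K\#K\#K$, the manifold $M_{4K}$ is a connected sum of four copies of $M_K$, so the linking form on $H$ is the orthogonal direct sum of four identical copies of the form on $H_1(M_K)_3$; in particular a single sign $\epsilon\in\{\pm1\}$ satisfies $\beta(v_j,v_j)=\epsilon/3$ on each of the four $\zz_3$ summands (and a single sign governs the four $\zz_{3^{2i}}$ summands), exactly as is used implicitly in the proof of Lemma \ref{obs0}. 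Because $(3^{2i-1})^2/3^{2i}=3^{2i-2}$ is an integer for $i\ge 1$, the $\zz_{3^{2i}}$--part of $z$ contributes an integer to $\beta(z,z)$, so $\beta(z,z)\equiv \epsilon(2+s^2+t^2)/3\pmod{\zz}$. Now $z\in M=M^{\perp}$, so $\chi_z=\beta(z,\cdot)$ vanishes on $M$ and has order dividing $3$ (since $3z=0$); Theorem \ref{CG} gives $\sigma(4K,\chi_z)=0$, and Theorem \ref{cglink} then forces $\beta(z,z)\equiv 0\pmod{\zz}$, i.e.\ $2+s^2+t^2\equiv 0\pmod 3$. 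As $s^2,t^2\in\{0,1\}$ modulo $3$, this means exactly one of $s,t$ is nonzero; it also rules out the a priori patterns $s=t=0$ and $s,t$ both units.

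It remains only to arrange the first two $\zz_{3^{2i}}$--coordinates, which I would do by replacing $z$ with $z+3^{2i-1}a\,u_1+3^{2i-1}b\,u_2$. Since $3^{2i-1}\equiv 0\pmod 3$, the $V$--part is unchanged, so the $\zz_3$--coordinates are still $(1,1,s,t)$ with exactly one of $s,t$ nonzero; meanwhile the first two $\zz_{3^{2i}}$--coordinates become $3^{2i-1}a$ and $3^{2i-1}b$, and the last two change by $3^{2i-1}(aw_{1,3}+bw_{2,3})$ and $3^{2i-1}(aw_{1,4}+bw_{2,4})$ and so remain multiples of $3^{2i-1}$. This produces the asserted element for each $a,b\in\{0,\pm1\}$. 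The only step that genuinely requires care is the uniform--sign observation for the four $\zz_3$ summands: it is what turns the self--linking identity into the constraint $2+s^2+t^2\equiv 0\pmod 3$, which is exactly sharp enough to give the conclusion. Everything else is routine bookkeeping with the matrix of Lemma \ref{basicform}.
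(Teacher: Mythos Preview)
Your argument is correct and follows essentially the same route as the paper: you use vanishing of the self--linking to pin down the two $\ast$ entries, and you add $3^{2i-1}a\,u_1+3^{2i-1}b\,u_2$ to produce the desired first two $\zz_{3^{2i}}$--coordinates (the paper performs these two steps in the opposite order, which is immaterial since the $V$--part is untouched by the addition). One small simplification: you do not need to invoke Theorems \ref{CG} and \ref{cglink} to obtain $\beta(z,z)\equiv 0\pmod\zz$; this is immediate from $z\in M=M^{\perp}$, and that is exactly what the paper uses.
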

\begin{proof} Add $3^{2i-1}a$ times the first row and
$3^{2i-1}b$ times the second row  of the matrix 
to the element given in the previous
lemma. The condition on  the first two $*$s  
comes from the fact that the self--linking of
the resulting element must be 0.
\end{proof}

\begin{proof}[{\bf COMPLETION OF PROOF, THEOREM \ref{main}}]

By Theorem \ref{cglink}, 
$ \sigma_{1,0},\ \sigma_{1,3^{2i-1}}$ and $\sigma_{1,2 \cdot 3^{2i-1}}$ are nonzero.

 From the previous lemma we
have,
 in the case $a = b = 0$,   that  either $3\sigma_{1,0} = 0$,  
$2\sigma_{1,0} +
\sigma_{1,3^{2i-1}} = 0$ or 
$2\sigma_{1,0} +
\sigma_{1,2 \cdot 3^{2i-1}} = 0$. The possibility that  $3\sigma_{1,0} = 0$ contradicts
Theorem \ref{cglink}, so either 
$2\sigma_{1,0} +
\sigma_{1,3^{2i-1}} = 0$, or  $2\sigma_{1,0} +
\sigma_{1,2 \cdot 3^{2i-1}} = 0$. 

Similarly, by letting $a =b =1$ we have either $2\sigma_{1,3^{2i-1}} +
\sigma_{1,0} = 0$ or 
$2\sigma_{1,3^{2i-1}} +
\sigma_{1,2 \cdot 3^{2i-1}} = 0$.

Finally,  letting $a =b = -1$ we have either $2\sigma_{1,2 \cdot 3^{2i-1}} +
\sigma_{1,0} = 0$ or 
$2\sigma_{1,2 \cdot 3^{2i-1}} +
\sigma_{1,3^{2i-1}} = 0$.

Considering the two relations $2\sigma_{1,0} +
\sigma_{1,3^{2i-1}} = 0$ and  $2\sigma_{1,3^{2i-1}} +
\sigma_{1,0} = 0$ together,  it follows that $3\sigma_{1,0} = 0$, contradicting
Theorem \ref{cglink}.  Similar considerations with pairs of relations rule out several possibilities.

Only two possibilities remain: the first is that  
$2\sigma_{1,0} + \sigma_{1,3^{2i-1}} = 0$, $2\sigma_{1,3^{2i-1}} + \sigma_{1,2 \cdot 3^{2i-1}} = 0$, and
$2\sigma_{1,2 \cdot 3^{2i-1}} + \sigma_{1,0} = 0$; 
the second is that $2\sigma_{1,0} + \sigma_{1,2 \cdot 3^{2i-1}} = 0$, 
$2\sigma_{1,3^{2i-1}} + \sigma_{1,0} = 0$, and $2\sigma_{1,2 \cdot 3^{2i-1}} + \sigma_{1,3^{2i-1}} = 0$.  Either
case quickly implies that $3^{2i}\sigma_{1,0} = 0$, so $\sigma_{1,0} =0$, again
contradicting \ref{cglink}.
\end{proof}

\section{Applications}\label{applicationssec}

Consider a knot with Alexander polynomial $\Delta_K(t) = kt^2 - (2k+1)t + k$, $k
\ge 0$ According to Levine \cite{le2} such a knot has finite order in the algebraic
concordance group.  It will have algebraic concordance order 4 if and only if there is some prime congruent
to 3 mod 4 which has odd exponent in $4k+1$.  According to \cite{ln1}, if $4k+1 = 3m$
with $m$ prime to 3 then $K$ is not of order 4 in concordance. 
We have the following extension.

\begin{corollary}  \label{application}
If $\Delta_K(t) = kt^2 -(2k+1)t + k$ and $4k+1 = (3^{2n+1})m$ with $n=0$ or $1$ and $m$
prime to 3 then $K$ is not of order 4 in concordance.
\end{corollary}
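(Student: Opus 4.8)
The plan is to reduce Corollary \ref{application} to Theorem \ref{main} by computing the homology of the 2--fold branched cover directly from the Alexander polynomial. Recall that for a knot $K$ with Seifert matrix $V$, the 2--fold branched cover satisfies $H_1(M_K) \cong \zz^{2g}/(V + V^T)\zz^{2g}$, so that $|H_1(M_K)| = |\det(V + V^T)| = |\Delta_K(-1)|$. When $\Delta_K(t)$ is quadratic, $K$ has genus $1$, $V$ is $2\times 2$, and $H_1(M_K)$ is therefore a quotient of $\zz^2$, hence generated by at most two elements. With $\Delta_K(t) = kt^2 - (2k+1)t + k$ we get $\Delta_K(-1) = k + (2k+1) + k = 4k+1$, so $|H_1(M_K)| = 4k+1 = 3^{2n+1} m$ with $m$ prime to $3$.

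The key step is to pin down the $3$--primary part $H_1(M_K)_3$ as an abelian group. Since $H_1(M_K)$ is a quotient of $\zz^2$ of order $3^{2n+1}m$, its $3$--primary subgroup is a quotient of $\zz^2$ of order $3^{2n+1}$, hence either cyclic $\zz_{3^{2n+1}}$ or of the form $\zz_{3^a} \oplus \zz_{3^b}$ with $a + b = 2n+1$, $1 \le a \le b$. First I would dispose of the cyclic case: if $H_1(M_K)_3 \cong \zz_{3^{2n+1}}$, then since $3 \equiv 3 \bmod 4$ and $2n+1$ is odd, Theorem \ref{oldthm} already gives that $K$ has infinite order in $\calc$, in particular not order $4$. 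In the non-cyclic case, for $n = 0$ we would need $a + b = 1$ with $a,b \ge 1$, which is impossible, so only the cyclic case occurs; for $n = 1$ we need $a + b = 3$ with $1 \le a \le b$, forcing $a = 1$, $b = 2$, i.e. $H_1(M_K)_3 \cong \zz_3 \oplus \zz_{3^2} = \zz_3 \oplus \zz_{3^{2i}}$ with $i = 1$. This is exactly the hypothesis of Theorem \ref{main}, which then yields that $K$ is not of order $4$ in $\calc$.

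Assembling these cases: for $n = 0$, $H_1(M_K)_3$ is forced to be cyclic of order $3$, and Theorem \ref{oldthm} (with $p = 3$, $k = 1$) applies; for $n = 1$, $H_1(M_K)_3$ is either $\zz_{3^3}$, handled by Theorem \ref{oldthm}, or $\zz_3 \oplus \zz_{3^2}$, handled by Theorem \ref{main}. In all cases $K$ is not of order $4$ in concordance, which is the claim. The one point requiring a little care — the main obstacle, such as it is — is justifying that a $2$--generator abelian group of $3$--power order $3^{2n+1}$ with $n \le 1$ is constrained to the short list above; this is an elementary computation with the structure theorem for finite abelian groups, using crucially that $2n+1$ is odd and small, so that a rank-$2$ splitting $\zz_{3^a}\oplus\zz_{3^b}$ with $a,b\ge 1$ and $a+b$ odd forces $a + b \ge 3$ and hence $\{a,b\} = \{1,2\}$. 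No genuine difficulty arises beyond this bookkeeping, since all the analytic content is already contained in Theorems \ref{oldthm} and \ref{main}.
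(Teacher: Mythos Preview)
Your argument is essentially the paper's: both use that a quadratic Alexander polynomial forces $H_1(M_K)_3$ to have rank at most $2$, then dispatch the cyclic case via Theorem~\ref{oldthm} (equivalently \cite{ln1,ln2}) and the rank--$2$ case $\zz_3\oplus\zz_9$ via Theorem~\ref{main}. One small correction: a quadratic $\Delta_K$ does not force the Seifert genus to be $1$ (only $\ge 1$), so the step ``$V$ is $2\times 2$'' is not justified as written; the conclusion that $H_1(M_K)$ has rank at most $2$ is nevertheless correct and is exactly what the paper asserts without further comment.
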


\begin{proof} The case $n=0$ is settled by \cite{ln1}. 
So let $n=1.$ Since the Alexander polynomial is quadratic, $H_1(M_K)$ is of rank at
most 2.  In the case that the rank is 1, then $H_1(M_K)_3  \cong  \zz_{27}$ 
and hence
the main theorem of \cite{ln2} applies to show that $K$ is not of order 4.   In the
case that the rank of $H_1(M_K)_3$ is 2, then $H_1(M_K)_3 \cong \zz_3\oplus \zz_9$ and
Theorem \ref{main} applies. 
\end{proof}

\noindent {\bf Doubled Knots}  According to \cite{cg1,cg2} the $k$--twisted double of
the unknot, $D_k$, is algebraically slice if and only if $4k+1 = l^2$ for some integer
$l$.  We are thus interested in the case that $4k+1 = 9m^2$ with $m$ prime to 3.  For
this to occur, $m$ must be odd: $m = 2n+1$.  Solving gives $k = 9(n^2 +n) +2$. 
Furthermore, $m$ will be prime to 3 if $ n \ne 1$ mod 3. 

A similar calculation shows that $D_k$ satisfies $H_1(D_K) \cong \zz_3 \oplus \zz_m$ 
with $m$ prime to 3 if $k = 3n +2$ with $n \ne 0$ mod 3.   Hence, we have the
corollary:

\begin{corollary} For all positive $r \ne 0$ mod 3 and positive $s \ne 1$ mod 3, the
knot
$D_{3r+2}
\# D_{9(s^2+s)+2}$ is of algebraic order 4 but is not of order 4 in concordance.
\end{corollary}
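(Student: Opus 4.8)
The plan is to assemble the corollary from the two building blocks established just above it, using only additivity of the relevant algebraic invariant and Corollary~\ref{application}. First I would record why each summand $D_{3r+2}$ and $D_{9(s^2+s)+2}$ has the homology advertised in the preceding paragraph: for a $k$--twisted double $D_k$ one has $H_1(M_{D_k}) \cong \zz_{|4k+1|}$, so for $k = 3r+2$ we get $4k+1 = 12r+9 = 3(4r+3)$, and $4r+3$ is prime to $3$ exactly when $r \ne 0$ mod $3$, giving $H_1(M_{D_{3r+2}})_3 \cong \zz_3$; while for $k = 9(s^2+s)+2$ we get $4k+1 = 36(s^2+s)+9 = 9(4s^2+4s+1) = 9(2s+1)^2$, and $2s+1$ is prime to $3$ exactly when $s \ne 1$ mod $3$, so $H_1(M_{D_{9(s^2+s)+2}})_3 \cong \zz_9$ (this second knot is in fact algebraically slice, by the Casson--Gordon criterion $4k+1 = l^2$ recalled in the paragraph on doubled knots).

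Next I would verify the two claims that make the corollary genuine: that the connected sum has algebraic order $4$, and that it is not of order $4$ in $\calc$. For the algebraic statement, note $\Delta_{D_k}(t) = kt^2 - (2k+1)t + k$, so each summand sits inside the family analyzed at the start of Section~\ref{applicationssec}. The first summand $D_{3r+2}$ has $4k+1 = 3(4r+3)$ with $3$ appearing to odd exponent, hence by Levine's criterion it has algebraic order $4$; the second summand is algebraically slice. Since $\phi$ is a homomorphism, $\phi(D_{3r+2}\#D_{9(s^2+s)+2}) = \phi(D_{3r+2})$ has order $4$ in $\calg$.

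For the concordance statement I would compute $H_1(M_{D_{3r+2}\#D_{9(s^2+s)+2}})_3$. Since $M_{K_1 \# K_2} = M_{K_1} \# M_{K_2}$ and first homology of a connected sum is the direct sum, this $3$--primary part is $\zz_3 \oplus \zz_9$. That is precisely the hypothesis of Theorem~\ref{main} (the case $i = 1$), so $D_{3r+2}\#D_{9(s^2+s)+2}$ is not of order $4$ in $\calc$. One could equally invoke Corollary~\ref{application}: the connected sum has quadratic Alexander polynomial $\Delta_{K_1}\Delta_{K_2}$ with $\Delta_{K_2}(-1) = (2s+1)^2$ a unit times... actually cleaner is to run Theorem~\ref{main} directly on the homology, which is what I would write.

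The only subtlety worth flagging is the bookkeeping mod $3$ in the two parametrizations—checking that $r \ne 0$ mod $3$ and $s \ne 1$ mod $3$ are exactly the conditions forcing the non--$3$ parts $\zz_{4r+3}$ and $\zz_{(2s+1)^2}$ to carry no $3$--torsion, so that $H_1(M_K)_3$ is exactly $\zz_3 \oplus \zz_9$ and not larger. This is routine arithmetic rather than a real obstacle; the genuine content has already been done in Theorem~\ref{main}.
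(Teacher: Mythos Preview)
Your proposal is correct and follows exactly the route the paper sets up in the two paragraphs preceding the corollary: identify $H_1(M_{D_{3r+2}})_3 \cong \zz_3$ and $H_1(M_{D_{9(s^2+s)+2}})_3 \cong \zz_9$ via the arithmetic on $4k+1$, note the second summand is algebraically slice so the connected sum inherits algebraic order $4$ from the first, and apply Theorem~\ref{main} with $i=1$ to the direct sum $\zz_3 \oplus \zz_9$. The paper leaves the proof implicit, and you have filled in precisely the intended details; the one slip is calling $\Delta_{K_1}\Delta_{K_2}$ ``quadratic'' (it is quartic, so Corollary~\ref{application} does not apply to the connected sum), but you abandon that line anyway and invoke Theorem~\ref{main} directly, which is the right move.
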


Finally, results of this paper apply to S-equivalence classes of knots. To show that the 
algebraic concordance class of a knot $K$ 
 cannot be realized by a knot of concordance order 4,  we need to consider knots 
with the same Seifert form as $K \# J$, where $J$ is algebraically slice.  The present
paper marks the first progress in that direction, by showing that if
 $H_1(M_K) \cong \zz_3$ and $J$ is an algebraically
slice knot with $H_1(M_J)_3$ cyclic, then $K \# J$ is not of order 4.

\bibliographystyle{plain}

\end{document}